\documentclass{article}

\usepackage{graphicx}
\usepackage{amsfonts,amsmath,amssymb,amsthm}
\usepackage{xcolor}

\newtheorem{theorem}{Theorem}
\newtheorem*{quotedtheorem}{Theorem}
\newtheorem{lemma}{Lemma}
\newtheorem{corollary}{Corollary}

\newtheorem{remark}{Remark}
\theoremstyle{definition}
\newtheorem{problem}{Problem}

\newcommand{\ex}{\operatorname{ex}}

\title{Tur\'an-Type Bounds for Graphs Containing Large $F$-Sparse Sets}
\author{Yupei Li
\thanks{University of South Carolina, Columbia, SC 29208, ({\tt yupei@email.sc.edu})}
 \and Linyuan Lu\thanks{University of South Carolina, Columbia, SC 29208,
({\tt lu@math.sc.edu}).
This author was supported in part by NSF grant DMS 2038080.}
}

\begin{document}

\maketitle

\begin{abstract}
We study Tur\'an-type extremal problems for graphs containing a large $F$-sparse vertex set, meaning a vertex set whose induced subgraph contains few copies of $F$. For integers $r>s\ge 1$, we prove that if a $K_{r+1}$-free graph $G$ on $n$ vertices contains a set $M$ of size $m\ge \lceil sn/r\rceil$ such that $G[M]$ is $K_{s+1}$-free, then
\[
   e(G)\le m(n-m)+t_s(m)+t_{r-s}(n-m).
\]
We characterize the equality cases as the complete $r$-partite graphs whose vertex classes split into two balanced groups of total sizes $m$ and $n-m$, consisting of $s$ and $r-s$ classes, respectively. We also prove a color-critical extension for forbidden graphs that embed into a join of two edge-critical graphs, together with an asymptotic extension for general $H$-free graphs in which the prescribed large vertex set spans few copies of a fixed graph $F$ with $\chi(F)<\chi(H)$.
\end{abstract}

\section{Introduction}

The Erd\H{o}s--Stone--Simonovits theorem is a cornerstone of extremal graph theory: it determines the asymptotic value of the Tur\'an number for every non-bipartite graph $H$. In the special case where the forbidden graph is the complete graph $K_{r+1}$, Tur\'an's theorem gives the exact extremal result: 
$\ex(n, K_{r+1})=t_r(n)$; 
the unique extremal graph is the Tur\'an graph $T_r(n)$, the complete $r$-partite graph on $n$ vertices whose vertex classes form an equitable partition.  
More generally, $T_r(n)$ is asymptotically extremal for every graph $H$ with chromatic number $\chi(H)=r+1$.

In this paper, we study how additional structural constraints on the host graph affect the corresponding extremal problem. We are particularly interested in situations where an $H$-free graph is required to contain a large prescribed vertex set and where this additional requirement forces the maximum number of edges to be smaller than the classical Tur\'an bound.

A first result in this direction was obtained by Balister, Bollob\'as, Riordan, and Schelp~\cite{BBRS}, who studied $C_{2k+1}$-free graphs under a maximum-degree constraint.

\begin{quotedtheorem}[\cite{BBRS}]
For sufficiently large $n$, if $G$ is an $n$-vertex $C_{2k+1}$-free graph with
\[
   n/2 \leq \Delta(G) \leq n-k-1,
\]
then
\[
   e(G)\leq \Delta(G)(n-\Delta(G)),
\]
with equality attained by the complete bipartite graph $K_{\Delta(G),n-\Delta(G)}$.
\end{quotedtheorem}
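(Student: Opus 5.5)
The approach is to fix a vertex $v$ of maximum degree, put $N=N(v)$ with $|N|=\Delta$ and $R=V(G)\setminus N$ (so $v\in R$ and $|R|=n-\Delta\ge k+1$), and split
\[
e(G)=e(G[N])+e(N,R)+e(G[R]).
\]
Summing degrees over $R$ and using $d(u)\le\Delta$ gives $e(N,R)+2e(G[R])\le\Delta(n-\Delta)$. Hence
\[
e(G)\le \Delta(n-\Delta)+e(G[N])-e(G[R]),
\]
and it suffices to show that every edge inside $N$ is paid for by missing edges elsewhere. More precisely, the goal is to prove
\[
e(G[N])\le e(G[R])+\bigl(\Delta(n-\Delta)-e(N,R)-2e(G[R])\bigr),
\]
where the bracket is the total degree deficiency on $R$.

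The first step is to bound $e(G[N])$ directly. Any path in $G[N]$ with $2k$ vertices (length $2k-1$) closes with $v$ to a $C_{2k+1}$. Therefore $G[N]$ contains no path on $2k$ vertices, and the Erd\H{o}s--Gallai theorem gives $e(G[N])\le (k-1)\Delta$, which is linear in $n$. By the same argument applied to every vertex, each neighbourhood $N(u)$ spans no path on $2k$ vertices. Hence every vertex lies in few triangles, and more generally $G$ is ``locally sparse.''

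The second step is to exploit the dense bipartite part between $N$ and $R$. If $e(G)>\Delta(n-\Delta)$, the degree deficiency on $R$ is at most $e(G[N])=O(n)$. So all but $O(1)$ vertices of $R$ are adjacent to all but $O(1)$ vertices of $N$, or else $R$ is tiny. In the first case, for $n$ large, the bipartite graph between $N$ and $R$ contains paths of every even length up to about $2|R|$ between any two typical vertices of $N$. Since $|R|\ge k+1$, these paths have length at least $2k$. An edge $xy$ in $G[N]$ with $x,y$ typical then combines with an even $x$--$y$ path of length $2k$ through $R$ to form a $C_{2k+1}$. So every edge of $G[N]$ must be incident to an atypical vertex of $N$, i.e.\ one missing many neighbours in $R$. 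I would then charge each such edge to the missing $N$--$R$ edges at its atypical endpoint. Together with the local bound from step one, this charging should give the displayed inequality.

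I expect the main obstacle to be this charging when $\Delta$ is close to $n-k-1$, so that $|R|$ is only slightly larger than $k$. There the even paths through $R$ are just barely long enough, and the ``typical'' vertices must be defined using exact degree counts rather than $o(n)$ slack. To handle it, I would first try a direct case analysis on the common neighbourhoods in $N$ of the vertices of $R$. The key fact is that $k+1$ vertices of $R$ with pairwise large common neighbourhoods in $N$ already supply an $x$--$y$ path of length exactly $2k$. The hypothesis $\Delta\le n-k-1$ is exactly what guarantees these $k+1$ vertices, and equality forces $e(G[N])=e(G[R])=0$ with full degrees on $R$, i.e.\ $G=K_{\Delta,n-\Delta}$.
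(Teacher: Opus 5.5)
The paper does not prove this statement; it quotes it from \cite{BBRS}. Your proposal therefore has to stand on its own, and as written it has a genuine gap at its decisive steps.

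\textbf{The structural step and the bounded-$|R|$ regime.} The inference from $D=O(n)$ to ``all but $O(1)$ vertices of $R$ see all but $O(1)$ vertices of $N$, or $R$ is tiny'' is not valid. Let $\bar e$ be the number of non-adjacent pairs in $N\times R$. Under $e(G)>\Delta(n-\Delta)$ you get $\bar e=D+2e(G[R])<2e(G[N])\le 2(k-1)\Delta$. This only says that an average vertex of $N$ misses $O(1)$ vertices of $R$.
\begin{itemize}
\item When $|R|\approx\sqrt n$, the count is consistent with every vertex of $R$ missing $\Theta(\sqrt n)$ vertices of $N$.
\item When $|R|=k+1$ and $k\ge 3$, the bound $2(k-1)\Delta$ is at least the total number $(k+1)\Delta$ of pairs, so you learn nothing. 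The vertices of $R\setminus\{v\}$ may have small or disjoint neighbourhoods in $N$, and your ``key fact'' about $k+1$ vertices with large common neighbourhoods has nothing to act on.
\end{itemize}
Yet this bounded regime is the heart of the theorem, and it is sharp. For $\Delta=n-k$ with $(2k-1)\mid\Delta$, join $v$ to a disjoint union of copies of $K_{2k-1}$ on $N$ and attach $k-1$ pendant vertices. Every block is a $K_{2k}$ or a bridge, so the graph is $C_{2k+1}$-free, and it has $k\Delta+k-1>\Delta(n-\Delta)$ edges. Note also that an alternating $x$--$y$ path of length $2k$ with $x,y\in N$ needs only $k$ vertices of $R$. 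So the hypothesis $|R|\ge k+1$ is not what makes such paths exist; its role is in exact accounting. Leaving this range to ``a direct case analysis'' leaves the actual proof undone.

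\textbf{The charging step.} Even when $|R|$ is large, the charging fails as described. Knowing only that every edge of $G[N]$ has one atypical endpoint does not exclude a star in $G[N]$ with an atypical centre and $\Theta(\Delta)$ typical leaves. The centre has at most $|R|-1$ missing edges to $R$ to charge to. The missing observation is that $v$ sees all of $N$. If $xy\in E(G[N])$ and $y$ starts an alternating path $y\,r_1a_1\cdots r_{k-1}a_{k-1}$ with $r_i\in R\setminus\{v\}$ and $a_i\in N\setminus\{x,y\}$, then $x\,y\,r_1a_1\cdots r_{k-1}a_{k-1}\,v\,x$ is a $C_{2k+1}$. So both endpoints must be atypical, and Erd\H{o}s--Gallai applied to the set $S$ of non-isolated vertices of $G[N]$ gives $e(G[N])\le (k-1)|S|$.

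You must also pay with $D+e(G[R])$, not with the missing pairs $\bar e=D+2e(G[R])$. Since $d_R(r)\le \Delta-d_N(r)$ for $r\in R$, one has $D+e(G[R])\ge \bar e/2$. This closes the argument once every vertex of $S$ misses at least $2k-2$ vertices of $R$. That holds when $|R|\ge C(k)$: only $O(k/\varepsilon)$ vertices of $R$ miss $\varepsilon\Delta$ vertices of $N$, and a vertex of $S$ can be adjacent to none of the remaining ones except $v$. It fails for $k+1\le |R|<C(k)$, which is exactly the case your proposal leaves open.
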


Huo and Yuan~\cite{HY} later generalized this result to all edge-critical graphs. Recall that a graph is \emph{edge-critical}, or \emph{color-critical}, if it contains an edge whose deletion reduces its chromatic number. In particular, every odd cycle is edge-critical.

\begin{quotedtheorem}[\cite{HY}]
Let $F$ be an edge-critical graph with $\chi(F)=r+1$. There exists a constant $s=s(F)<1$ such that, for sufficiently large $n$, if $G$ is an $n$-vertex $F$-free graph with
\[
   \frac{r-1}{r}n \leq \Delta(G) \leq n-\Theta(n^s),
\]
then
\[
   e(G)\leq \Delta(G)(n-\Delta(G))+t_{r-1}(\Delta(G)),
\]
with equality attained by the complete $r$-partite graph
\[
   \overline{K_{n-\Delta(G)}}+T_{r-1}(\Delta(G)).
\]
\end{quotedtheorem}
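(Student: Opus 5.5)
Let $\Delta=\Delta(G)$, let $v$ be a vertex of maximum degree, and write $N=N(v)$ and $R=V(G)\setminus N$, so $|N|=\Delta$ and $|R|=n-\Delta$. The plan is to reduce the statement to a stability-plus-cleaning argument. Since $\Delta\ge \frac{r-1}{r}n$, the trivial bound $e(G)\le \ex(n,F)=t_r(n)+O(1)$ is weaker than the target. We therefore start with the lower bound
\[
e(G)\ge \Delta(n-\Delta)+t_{r-1}(\Delta);
\]
otherwise there is nothing to prove. A direct computation shows this quantity is at least $t_r(n)-O((\Delta-\frac{r-1}{r}n)^2)$, and the conditions on $\Delta$ keep it of order $n^2$.

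The first main step is structural. By the Erd\H{o}s--Stone--Simonovits stability theorem (edge-critical $F$ with $\chi(F)=r+1$), $G$ is $o(n^2)$-close to $T_r(n)$. We then use the standard refinement for edge-critical graphs (Simonovits' method). Take a max-cut $r$-partition $V_1,\dots,V_r$, remove the $o(n)$ vertices with many neighbours inside their own class, and show that among the remaining vertices every class $V_i$ is independent. The reason is that an edge inside a class, combined with the large common neighbourhoods in the other classes, would create a copy of $F$, because $F$ minus its critical edge is $r$-colourable and embeds into a blown-up $K_r$.

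Next we locate $v$. Because $v$ has degree $\Delta\ge\frac{r-1}{r}n$ and $G$ is nearly $r$-partite, $v$ can have only $o(n)$ neighbours in at least one class, say $V_1$. Then $N$ is essentially $V_2\cup\dots\cup V_r$, and $R$ is essentially $V_1$ together with the part of the other classes not adjacent to $v$.

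The key counting step splits the edges as
\[
e(G)=e(G[N])+e(N,R)+e(G[R]).
\]
We aim to show three things:
\begin{itemize}
\item $G[N]$ is $K_r$-free up to negligible error. Indeed, a $K_r$ (indeed a dense $F'$ where $F'$ is $F$ minus a vertex of the critical edge) inside $N$ together with $v$ yields $F$. Since $F$ is edge-critical, $F$ minus a suitable vertex has chromatic number $r$ and is again "nearly critical", so $e(G[N])\le t_{r-1}(\Delta)+$ correction.
\item $e(N,R)\le \Delta(n-\Delta)$ trivially.
\item $e(G[R])$ is small, and every edge inside $R$ forces a deficit in $e(N,R)$ or $e(G[N])$ of at least comparable size.
\end{itemize}
The upper restriction $\Delta\le n-\Theta(n^{s})$ guarantees that $|R|$ is large enough for the common-neighbourhood (Kővári--Sós--Turán type) embedding argument to work: an edge $xy$ in $R$ whose endpoints have many common neighbours spanning dense $(r-1)$-partite structure in $N$ would produce $F$. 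Here $s(F)<1$ comes from the Zarankiewicz exponent for $K_{t,t}$ with $t=|V(F)|$.

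The main obstacle is this exact (not asymptotic) trade-off. We must show that each "bad" configuration, meaning an edge inside $R$, a missing $N$–$R$ edge, or extra edges inside $N$ beyond $t_{r-1}(\Delta)$, costs at least as much as it gains, including the exceptional vertices removed in the stability step. I would handle the exceptional vertices by first proving a minimum-degree reduction: delete vertices of degree below $(1-\frac1r-\epsilon)n$ one at a time, checking that the target bound $\Delta(n-\Delta)+t_{r-1}(\Delta)$ drops by at most the degree removed. Then on the clean graph we show each vertex of $R$ has at most $O(1)$ neighbours in $R$ per forbidden structure, via a Füredi-style defect counting. Equality forces $G[N]=T_{r-1}(\Delta)$, $R$ independent and complete to $N$, which is the stated extremal graph.
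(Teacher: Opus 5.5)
The paper only quotes this result from Huo and Yuan and gives no proof of it, so your proposal has to stand on its own. As written it has a genuine gap already at the structural step.

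\textbf{The stability step is invalid over most of the range.} Erd\H{o}s--Simonovits stability needs $e(G)\ge t_r(n)-o(n^2)$. The only lower bound you have is
\[
   \Delta(n-\Delta)+t_{r-1}(\Delta)
   =t_r(n)-\frac{r}{2(r-1)}\Bigl(\Delta-\frac{r-1}{r}n\Bigr)^2+O(1),
\]
which is $t_r(n)-\Theta(n^2)$ as soon as $\Delta\ge(\frac{r-1}{r}+\delta)n$. In that range the conclusion is not just unproved but false:
\begin{itemize}
\item The extremal graph $\overline{K_{n-\Delta}}+T_{r-1}(\Delta)$ is $\Theta(n^2)$-far from $T_r(n)$. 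For $r=2$ and $\Delta=n-\Theta(n^{s})$ it has only $\Theta(n^{1+s})$ edges.
\item An $F$-free graph that is $o(n^2)$-close to $T_r(n)$ cannot contain a vertex of degree $(\frac{r-1}{r}+\delta)n$ at all. Its neighbourhood would meet every class in $\Omega(n)$ vertices and yield $K_1+K_r(t)\supseteq F$.
\end{itemize}
Hence your max-cut partition into classes of size about $n/r$, and the step ``locate $v$'', only cover $\Delta=\frac{r-1}{r}n+o(n)$.

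The minimum-degree cleaning at threshold $(1-\frac1r-\epsilon)n$ fails for the same reason. In the extremal graph, the $\Delta$ vertices on the $T_{r-1}(\Delta)$ side have degree about $n-\frac{\Delta}{r-1}$ (just $n-\Delta$ when $r=2$). This is below your threshold once $\Delta$ exceeds $\frac{r-1}{r}n$ by more than $(r-1)\epsilon n$. So the cleaning would delete most of the very structure you are trying to recognise.

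\textbf{The second, more essential gap is hidden in your ``$+$ correction''.} A $K_r$ in $N$ together with $v$ gives $K_{r+1}$, not $F$. What you actually know is that $G[N]$ is $(F-z)$-free, because $F\subseteq K_1+(F-z)$. For an endpoint $z$ of the critical edge one has $\chi(F-z)=r$. But $F-z$ is in general not edge-critical, so $\ex(\Delta,F-z)$ can exceed $t_{r-1}(\Delta)$ by a polynomial amount.

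This excess is exactly what makes the statement false when $n-\Delta$ is tiny. For $F=C_5$, the graph $K_1+(\text{disjoint triangles})$ is $C_5$-free, has $\Delta=n-1$, and has $2(n-1)>\Delta(n-\Delta)$ edges, while $R=\{v\}$ spans no edges at all. So the hypothesis $\Delta\le n-\Theta(n^s)$ is there to pay for excess edges inside $N$, not to control edges inside $R$. Your proposal contains no mechanism for this.

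A sound starting point is the exact identity obtained from $e(N,R)+2e(G[R])=\sum_{u\in R}d(u)$:
\[
   e(G)=\Delta(n-\Delta)+e(G[N])-e(G[R])-\sum_{u\in R}\bigl(\Delta-d(u)\bigr).
\]
In a counterexample this has two consequences.
\begin{itemize}
\item It forces $e(G[N])>t_{r-1}(\Delta)$. So stability should be applied to $G[N]$ against $T_{r-1}(\Delta)$, not to $G$ against $T_r(n)$.
\item It forces
\[
   \sum_{u\in R}\bigl(\Delta-|N(u)\cap N|\bigr)<2\bigl(\ex(\Delta,F-z)-t_{r-1}(\Delta)\bigr)=O(\Delta^{2-\gamma}).
\]
So once $|R|\ge n^{s}$ with $s>1-\gamma$, a typical vertex of $R$ misses only $o(\Delta)$ vertices of $N$. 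This, rather than edges inside $R$, is where a Zarankiewicz-type exponent enters.
\end{itemize}

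What is still missing, and what ``F\"uredi-style defect counting'' does not supply, is the exact argument that every excess edge of $G[N]$ is blocked or paid for by these nearly complete vertices of $R$. One possible route is through $F\subseteq\overline{K_{|C|}}+(F-C)$. Here $C$ is a colour class, not containing the endpoints of $e$, of a proper $r$-colouring of $F-e$, so $F-C$ is edge-critical with $\chi(F-C)=r$.
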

They also showed that one may take $s=0$ when $F$ is progressive-edge-critical. Here a graph $F$ is \emph{progressive-edge-critical} if $F$ is edge-critical and there exists a vertex $v$ such that $F-v$ is edge-critical and $\chi(F-v)=\chi(F)-1$.

For our exact extension, we use a related two-sided criticality condition. Call a graph $H$ \emph{double-edge-critical} if it contains two edges $e_1,e_2$ such that
\[
   \chi\bigl(H-\{e_1,e_2\}\bigr)=\chi(H)-2.
\]
Such edges are necessarily vertex-disjoint. Indeed, if $e_1$ and $e_2$ shared a vertex $v$, then $H-v$ would be a subgraph of $H-\{e_1,e_2\}$, and hence
\[
   \chi(H-v)\le \chi(H)-2.
\]
This contradicts the elementary inequality $\chi(H-v)\ge \chi(H)-1$, since deleting a single vertex can decrease the chromatic number by at most one.
Lemma~\ref{lem:double-critical-join} below shows that, whenever $r>s\ge 2$ and $r-s\ge 2$, every double-edge-critical graph $H$ with $\chi(H)=r+1$ embeds into the join $F_1+F$ of two edge-critical graphs satisfying
\[
   \chi(F_1)=r-s,
   \qquad
   \chi(F)=s+1.
\]
This structural decomposition allows us to extend the exact bound in Theorem~\ref{thm:1} to this family of forbidden graphs.

More recently, Behague, Chakraborti, and Liu~\cite{BCL} considered a different way of ``sabotaging'' a classical extremal theorem. They studied the maximum number of edges in a triangle-free graph that is required to contain a prescribed graph $\mathbb P$ as a subgraph; that is,
\[
\ex_{\mathbb P}(n,K_3)
=
\max\Bigl\{e(G): |V(G)|=n,\; G \text{ is } K_3\text{-free, and } \mathbb P\subseteq G\Bigr\}.
\]
For example, requiring $\Delta(G)\ge m$ is equivalent to forcing a copy of the star $K_{1,m}$ in $G$. When $m>\lceil n/2\rceil$, this star is not contained in the balanced complete bipartite graph $K_{\lfloor n/2\rfloor,\lceil n/2\rceil}$, so one naturally expects the Mantel bound to decrease.

This motivates the broader question of how the Tur\'an bound changes when one imposes an additional large structure on the host graph. In the present paper, the imposed structure is a large induced subgraph with bounded clique number. For $s\ge 1$, define
\[
   \alpha_s(G)
   =
   \max\bigl\{|M|: M\subseteq V(G) \text{ and } G[M] \text{ is } K_{s+1}\text{-free}\bigr\}.
\]
Thus $\alpha_1(G)=\alpha(G)$ is the usual independence number.

Our first main result gives the exact extremal number for $K_{r+1}$-free graphs with a prescribed large $K_{s+1}$-free vertex set.

\begin{theorem}\label{thm:1}
Let $r>s\ge 1$, and let $G$ be a $K_{r+1}$-free graph on $n$ vertices. Suppose that
\[
   \alpha_s(G)\ge m\ge \left\lceil \frac{sn}{r}\right\rceil.
\]
Then
\[
   e(G)\le m(n-m)+t_{r-s}(n-m)+t_s(m).
\]
Moreover, equality holds if and only if $G$ is a complete $r$-partite graph whose parts can be divided into two groups: $s$ parts with total size $m$, as equal as possible, and $r-s$ parts with total size $n-m$, as equal as possible.
\end{theorem}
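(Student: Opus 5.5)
Fix a set $M$ with $|M|=m$ and $G[M]$ $K_{s+1}$-free; if $\alpha_s(G)>m$, shrink a larger such set to size $m$. Put $W=V(G)\setminus M$, so $|W|=n-m$. The plan is a Zykov-type symmetrization that respects the partition $\{M,W\}$, followed by an optimization over the resulting ``blown-up'' quotient graph.

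\emph{Step 1: symmetrization.} Suppose $u,v$ lie on the same side ($u,v\in M$ or $u,v\in W$), are nonadjacent, and $d(u)<d(v)$. Replace $u$ by a clone of $v$. The edge count strictly increases, and $G$ stays $K_{r+1}$-free, since a clique uses at most one of two nonadjacent twins. Also $M$ stays $K_{s+1}$-free: if $u,v\in M$, the clone is a nonadjacent twin inside $M$; if $u,v\in W$, $G[M]$ is untouched. Iterating in the standard way (first making degrees equal within non-adjacency classes, then cloning) yields a graph $G'$ with $e(G')\ge e(G)$ and the same constraints. In $G'$, non-adjacency is an equivalence relation on $M$ and on $W$ separately. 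Hence $G'[M]$ is complete $a$-partite with $a\le s$, $G'[W]$ is complete $b$-partite, and adjacency between a class $A_i\subseteq M$ and a class $B_j\subseteq W$ is all-or-nothing. For the equality case I will record that every symmetrization step with $d(u)\ne d(v)$ is strict. When $d(u)=d(v)$, I will argue directly that an extremal $G$ already has this twin structure: in an extremal graph, two nonadjacent same-side vertices of equal degree must have identical neighbourhoods, otherwise a clone step followed by another strict step gains an edge.

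\emph{Step 2: the quotient problem.} Now $G'$ is a blow-up of a graph $Q$ on vertex set $\{A_1,\dots,A_a,B_1,\dots,B_b\}$, where the $A$'s form a clique, the $B$'s form a clique, and there are some cross edges. Moreover $Q$ is $K_{r+1}$-free, i.e.\ $\omega(Q)\le r$. I want to show
\[
e(G')\le m(n-m)+t_s(m)+t_{r-s}(n-m).
\]
The missing cross pairs are what pays for $b$ exceeding $r-s$. Consider the bipartite complement $\bar Q$ between the $A$'s and the $B$'s. The clique condition says that for every $I\subseteq[a]$ and $J\subseteq[b]$ with $|I|+|J|=r+1$, some pair in $I\times J$ is missing. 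By König's theorem, a maximum clique of $Q$ equals $a+b$ minus the size of a maximum matching in $\bar Q$. So $\bar Q$ has a matching of size at least $a+b-r$.

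Each matched non-edge $A_iB_j$ costs $|A_i||B_j|$ cross edges. Against this, merging classes gains edges. I would reduce to the case where the matching pairs classes so that, compared with the target configuration (an $s$-partite $M$ completely joined to an $(r-s)$-partite $W$), the extra internal edges in $W$ are at most the lost cross edges. The cleanest way to do this is probably an exchange or induction on $a+b-r$. For a non-edge $A_iB_j$, I would move the vertices of $B_j$ into $M$ as part of class $A_i$, or into another $B$-class, and compare. The hypothesis $m\ge \lceil sn/r\rceil$ enters here: the $M$-classes have average size at least $n/r$, while the $W$-classes, being more than $r-s$ in number, have average below $n/r$ whenever $b>r-s$. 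This makes losing $|A_i||B_j|$ at least as expensive as the gain from splitting $W$ further. Once $b\le r-s$ and $a\le s$ with all cross edges present, the bound follows from Turán's theorem on each side: $e(G'[M])\le t_s(m)$ and $e(G'[W])\le t_{r-s}(n-m)$.

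\emph{Main obstacle.} The hard part is Step 2: making the exchange argument rigorous and tight, including the boundary case $m=\lceil sn/r\rceil$, where several configurations can compete. I would first try a weighted inequality: for each matched non-edge $(A_i,B_j)$, show that $|A_i||B_j|$ dominates the increase $t_{b}(n-m)-t_{b-1}(n-m)$ coming from an extra $W$-class, using $|A_i|$ large relative to $(n-m)/b$. If that fails, I would instead work with the $r$-partite structure given by $\omega(Q)\le r$ and apply Turán's theorem to $Q$'s blow-up directly, with the constraint on $M$. For the characterization of equality, I would trace strictness back through Steps 1 and 2. Equality forces all cross edges, $a=s$ and $b=r-s$, and balanced parts on each side; in other words, $G$ is exactly the stated complete $r$-partite graph.
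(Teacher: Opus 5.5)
\textbf{Verdict: genuine gap.} The inequality is not yet proved, because Step~2 is only a plan, and your stated equality structure is false for a fixed $M$.

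Your Step~1 is essentially sound for the inequality. Side-respecting clones preserve $K_{r+1}$-freeness and the $K_{s+1}$-freeness of $G[M]$, and replacing each class by clones of a maximum-degree member gives the blow-up structure. The König computation $\nu(\bar Q)\ge a+b-r$ is also correct.

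\textbf{Step 2.} This is where the whole content of the theorem lies, and you propose two strategies but carry out neither. The first is false as stated. Only the \emph{average} $M$-class has size at least $n/r$; a matched class can be tiny. Take $s=2$, $r=3$, $M=A_1\cup A_2$ with $|A_2|=1$, $W=B_1\cup B_2$ with $|B_1|=|B_2|=(n-m)/2$, and let $Q$ miss only $A_2B_1$. The matched non-edge costs $(n-m)/2$, while the extra $W$-class gains $\lfloor (n-m)^2/4\rfloor$. The bound survives only because $G'[M]$ is far below $t_2(m)$, so any valid comparison must be global.

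Your fallback points the right way. The missing idea is to use the matching as a colouring.
\begin{itemize}
\item Each matched pair $A_i\cup B_j$ is independent in $G'$.
\item So $G'$ is a subgraph of a complete multipartite graph $K$ with $a+b-\nu(\bar Q)\le r$ parts, in which $M$ is covered by $a\le s$ parts.
\item Choose $s$ parts containing these, padding with empty parts, of total size $x\ge m$. Then $e(G')\le e(K)\le\tau_{r,s}(n,x)$.
\item By Lemma~\ref{lem:turan-difference},
\[
   \tau_{r,s}(n,y+1)-\tau_{r,s}(n,y)
   =\left\lfloor\frac{n-y-1}{r-s}\right\rfloor-\left\lfloor\frac{y}{s}\right\rfloor\le 0
   \qquad\text{for } y\ge \frac{sn}{r},
\]
because $(n-y-1)/(r-s)<n/r\le y/s$.
\item Hence $\tau_{r,s}(n,x)\le\tau_{r,s}(n,m)$.
\end{itemize}
This monotonicity is exactly where $m\ge\lceil sn/r\rceil$ enters.

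Completed this way, your route (symmetrization, König, monotonicity of $\tau_{r,s}$) genuinely differs from the paper's, which needs no symmetrization. The paper deletes a vertex $v\in M$ with $d_{G[M]}(v)\le t_s(m)-t_s(m-1)$ and uses the identity $\tau_{r,s}(n,m)-\tau_{r,s}(n-1,m-1)=(n-m)+t_s(m)-t_s(m-1)$. When the threshold drops, i.e.\ $m-1=\lfloor s(n-1)/r\rfloor$, it closes the induction with Lemma~\ref{lem:threshold-turan} and Tur\'an's theorem.

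\textbf{Equality.} Your claim that equality forces all $M$--$W$ edges and $b=r-s$ is false for a fixed $M$. Take $r=5$, $s=3$, $n=8$, $m=5$, and $G=T_5(8)$ with part sizes $2,2,2,1,1$. Let $M$ consist of two $2$-parts and one vertex $p$ of the third $2$-part. Then $G[M]$ is $K_4$-free and $e(G)=25=\tau_{5,3}(8,5)$. Yet $p$ is not adjacent to its partner $p'\in W$, and $G[W]$ is a triangle, so $b=3$.

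The reason is that $\tau_{r,s}(n,\cdot)$ can be constant across a step; here $\tau_{5,3}(8,5)=\tau_{5,3}(8,6)$. So equality only gives $\tau_{r,s}(n,x)=\tau_{r,s}(n,m)$ with possibly $x>m$. You must then check that the resulting $r$-partite graph still admits the stated grouping. It does, since at a zero step both groupings give the same multiset of part sizes, but this has to be argued.

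Separately, you assert that in an extremal graph, nonadjacent same-side vertices of equal degree have equal neighbourhoods, via ``a clone step followed by another strict step.'' You never identify that second step. Without it, equality information about $G'$ does not transfer back to $G$.
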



Our next result gives a color-critical extension for double-edge-critical forbidden graphs.

\begin{theorem}\label{thm:2}
Let $r>s\ge 2$ with $r-s\ge 2$, and let $H$ be a double-edge-critical graph with $\chi(H)=r+1$. Fix edge-critical graphs $F_1$ and $F$ supplied by Lemma~\ref{lem:double-critical-join}; thus
\[
   \chi(F_1)=r-s,
   \qquad
   \chi(F)=s+1,
   \qquad
   H\subseteq F_1+F.
\]
Then there exists $n_0=n_0(H,F_1,F,r,s)$ such that the following holds for all $n\ge n_0$.

Let $G$ be an $H$-free graph on $n$ vertices. Suppose that $G$ contains a set $M$ of size $m$ such that $G[M]$ is $F$-free and
\[
  m\ge \left\lceil \frac{sn}{r}\right\rceil.
\]
Put $q=n-m$ and $J_0=F_1+K_1$. Then
\[
   e(G)\le m(n-m)+t_s(m)+\ex(q,J_0).
\]
Consequently, there exists $q_0=q_0(F_1,r,s)$ such that, whenever $n-m\ge q_0$,
\[
   e(G)\le m(n-m)+t_s(m)+t_{r-s}(n-m).
\]
In this latter range, equality holds if and only if $G$ is a complete $r$-partite graph whose parts can be divided into two groups: $s$ parts with total size $m$, as equal as possible, and $r-s$ parts with total size $n-m$, as equal as possible.
\end{theorem}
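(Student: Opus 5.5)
Let $M$ be the given set and $Q=V(G)\setminus M$, with $|Q|=q=n-m$. We split
\[
e(G)=e(G[M])+e(M,Q)+e(G[Q]),
\]
and aim to show that any excess of $e(G[Q])$ over $\ex(q,J_0)$ is paid for by a deficit in the first two terms. The basic mechanism is this: if a vertex $v\in Q$ has many neighbours in $M$, then $N(v)\cap M$ spans an almost complete $s$-partite structure, and $v$ together with $M$ acts like a copy of $K_1+F$.

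\textbf{Step 1 (stability on $M$).} Since $G[M]$ is $F$-free and $\chi(F)=s+1$, Erd\H{o}s--Stone gives $e(G[M])\le t_s(m)+o(m^2)$. We may assume
\[
e(G)\ge m(n-m)+t_s(m)+\ex(q,J_0)\ge m(n-m)+t_s(m),
\]
and $e(M,Q)\le mq$. Hence $e(G[M])\ge t_s(m)-O(q^2)$. Because $m\ge sn/r$ forces $q\le (r-s)n/r$, this is not yet near-extremal unless $q$ is small relative to $m$.

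To handle this I would first establish a weighted inequality in the spirit of the proof of Theorem~\ref{thm:1}. Consider $G[Q]$: its edges beyond $\ex(q,J_0)$ force copies of $J_0=F_1+K_1$, and the apex vertex $x$ of such a copy is joined to a copy of $F_1$. If $x$ (together with that copy of $F_1$) had a common neighbourhood in $M$ containing a copy of $F$, we would obtain $F_1+F\supseteq H$. Here $F$ is edge-critical with $\chi=s+1$, and, by Simonovits' result, any near-Tur\'an $s$-partite dense set contains $F$ once it has an extra edge.

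\textbf{Step 2 (standard progressive argument).} I would run the Simonovits-style stability method. By Erd\H{o}s--Stone--Simonovits stability applied to $G$, which is $H$-free with $\chi(H)=r+1$, the graph $G$ is $o(n^2)$-close to $T_r(n)$. Combined with the structure on $M$, $G$ is close to an $r$-partite graph with $s$ classes essentially inside $M$ and $r-s$ classes essentially inside $Q$; this uses $m\ge sn/r$ together with a counting argument showing that each class lies mostly on one side. We then clean up vertices of low degree by deletion, inducting on $n$ as in F\"uredi's proof of Simonovits' theorem. After cleaning, every vertex has at most $o(n)$ neighbours inside its own class and is nearly complete to the other classes.

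\textbf{Step 3 (exact counting).} Among the cleaned vertices, a vertex of $Q$ lying in class $Q_i$ that has an edge inside $Q_i$ yields, together with the nearly complete connections, a copy of $F_1$ spread over the $Q$-classes (this is where edge-criticality of $F_1$ is used). Its common neighbourhood in $M$ is still almost complete $s$-partite. Therefore $G[M]$ restricted to that common neighbourhood must miss edges, else $F$ embeds and we get $H$. Balancing these two effects gives the first bound: internal edges within the $M$-classes are forbidden (by edge-criticality of $F$ and the $J_0$-freeness structure), and $G[Q]$ may be at most $J_0$-extremal.

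The second statement then follows because $J_0=F_1+K_1$ is edge-critical with $\chi=r-s+1$, so Simonovits' theorem gives $\ex(q,J_0)=t_{r-s}(q)$ for $q\ge q_0$, with $T_{r-s}(q)$ the unique extremal graph. The equality characterisation comes from tracing equality through the argument: $G[Q]=T_{r-s}(q)$, $G[M]$ is exactly Tur\'an, and $M$ is complete to $Q$.

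The main obstacle is Step 2: forcing the classes to align with $M$ and $Q$ when $q$ may be a constant or comparable to $n$. I would handle the small-$q$ regime separately by a direct degree argument.
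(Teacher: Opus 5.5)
There is a genuine gap, and it sits in Step 2, the step you flag as the main obstacle. There you apply Erd\H{o}s--Stone--Simonovits stability to conclude that $G$ is $o(n^2)$-close to $T_r(n)$. That requires $e(G)\ge t_r(n)-o(n^2)$, but you may only assume $e(G)\ge\tau_{r,s}(n,m)$. If $m/n\to c>s/r$, Lemma~\ref{lem:limit} gives $\tau_{r,s}(n,m)/\binom n2\to 1-c^2/s-(1-c)^2/(r-s)<1-1/r$. So nothing forces $G$ near $T_r(n)$. Indeed the claimed extremal graph itself, with $s$ classes of size about $m/s$ and $r-s$ classes of size about $q/(r-s)$, is $\Omega(n^2)$-far from $T_r(n)$. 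You would need a stability theorem for the constrained problem, which is not available off the shelf; the paper leaves stability for the general version open. The ``cleaning'' and ``balancing'' in Steps 2--3, and the small-$q$ ``direct degree argument'', are never actually carried out.

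The paper needs no stability. For $q\ge q_0$ it fixes $q$ and inducts on $n$ exactly as in Theorem~\ref{thm:1}:
\begin{itemize}
\item Lemma~\ref{lem:critical-low-degree} (Simonovits for $F$, applicable since $m\ge sq/(r-s)$ is large) gives $v\in M$ with $d_G(v)\le q+t_s(m)-t_s(m-1)$, which telescopes exactly against $\tau_{r,s}$.
\item Deleting $v$ keeps $q$ fixed.
\item Once $m-1=\lfloor s(n-1)/r\rfloor$, Lemma~\ref{lem:threshold-turan} gives $\tau_{r,s}(n-1,m-1)=t_r(n-1)$, and Simonovits' theorem for the edge-critical graph $J=F_1+F\supseteq H$ supplies that bound.
\end{itemize}

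Your forbidden-subgraph mechanism is also mis-aimed. You look for a copy of $F$ in the common neighbourhood in $M$ of a copy of $J_0$. In Step 3 you argue that $G[M]$ there ``must miss edges, else $F$ embeds''. But $G[M]$ is $F$-free by hypothesis, so $F$ never appears inside $M$ however dense it is, and no contradiction can come from this.

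The missing idea is to let the apex of $J_0=F_1+K_1$ play a vertex $u$ of a critical edge of $F$. Then only $F-u$, with $\chi(F-u)\le s$, must be found in the common neighbourhood. This is how the paper handles $q<q_0$. Write $A=t_s(m)-e(G[M])\ge 0$, $B=mq-e(M,Q)$, and $C=e(G[Q])-\ex(q,J_0)$. The assumption $e(G)>mq+t_s(m)+\ex(q,J_0)$ gives $C>A+B$. Hence $G[Q]$ contains a copy $Y$ of $J_0$ whose common neighbourhood $U\subseteq M$ misses at most $B<\binom{q_0}{2}$ vertices. Then $e(G[U])\ge t_s(m)-O(m)$ exceeds $\ex(|U|,F-u)$ by Erd\H{o}s--Stone. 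So $Y$ together with a copy of $F-u$ in $U$ contains $F_1+F\supseteq H$, a contradiction.
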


\begin{remark}
The finite term $\ex(q,F_1+K_1)$ is needed only when the complementary set has bounded size. Since $F_1+K_1$ is edge-critical with chromatic number $r-s+1$, Simonovits' color-critical Tur\'an theorem gives
\[
   \ex(q,F_1+K_1)=t_{r-s}(q)
\]
for all sufficiently large $q$. Thus Theorem~\ref{thm:2} gives the stated exact Tur\'an-type expression whenever $n-m$ lies in this Simonovits range.
\end{remark}

For an integer $t\ge 0$, we say that a graph $H$ is
\emph{$t$-progressive-edge-critical} if $H$ is edge-critical and there
exist vertices $v_1,\ldots,v_t$ such that, for each $1\le i\le t$, the graph
\[
   H-\{v_1,\ldots,v_i\}
\]
is edge-critical and has chromatic number $\chi(H)-i$. We call
\[
   H-\{v_1,\ldots,v_t\}
\]
a \emph{core} of $H$. For example, $K_t+C_{2k+1}$ is
$t$-progressive-edge-critical, with core $C_{2k+1}$.

When $t\ge 2$, every $t$-progressive-edge-critical graph is
double-edge-critical. In particular, if $H$ is $(r-s)$-progressive-edge-critical
with core $F$, then
\[
   H\subseteq K_{r-s}+F.
\]
In this setting, the auxiliary graph $J_0+K_1$ appearing in
Theorem~\ref{thm:2} is simply $K_{r-s+1}$. Hence Theorem~\ref{thm:2}
implies the following corollary. The same conclusion also holds in the
case $r=s+1$; we omit the minor modifications needed for this boundary case.

\begin{corollary}\label{cor:progressive}
Let $r>s\ge 1$, let $H$ be an $(r-s)$-progressive-edge-critical graph with
$\chi(H)=r+1$, and let $F$ be a core of $H$. Then there exists
$n_0=n_0(H,r,s)$ such that the following holds for all $n\ge n_0$.

Let $G$ be an $H$-free graph on $n$ vertices. Suppose that $G$ contains a
set $M$ of size $m$ such that $G[M]$ is $F$-free and
\[
   m\ge \left\lceil \frac{sn}{r}\right\rceil.
\]
Then
\[
   e(G)\le m(n-m)+t_s(m)+t_{r-s}(n-m).
\]
Moreover, equality holds if and only if $G$ is a complete $r$-partite graph
whose parts can be divided into two groups: $s$ parts with total size $m$,
as equal as possible, and $r-s$ parts with total size $n-m$, as equal as
possible.
\end{corollary}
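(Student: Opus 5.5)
We derive Corollary~\ref{cor:progressive} from Theorem~\ref{thm:2}. The case $r-s\ge 2$, $s\ge 2$ is handled directly; the boundary cases ($r=s+1$ or $s=1$) need a separate argument, sketched at the end.

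\textbf{Step 1: the join decomposition.} Let $v_1,\dots,v_{r-s}$ be the vertices witnessing progressiveness, so that $F=H-\{v_1,\dots,v_{r-s}\}$ is edge-critical with $\chi(F)=s+1$. Every vertex of $H$ lies in $F$ or is one of the $v_i$. Hence $H\subseteq K_{r-s}+F$, and $K_{r-s}$ is edge-critical with chromatic number $r-s$.

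\textbf{Step 2: applying Theorem~\ref{thm:2}.} Rather than relying on whatever pair Lemma~\ref{lem:double-critical-join} happens to produce, we apply the argument of Theorem~\ref{thm:2} with $F_1=K_{r-s}$ and the given core $F$. The proof of Theorem~\ref{thm:2} only uses the containment $H\subseteq F_1+F$ with $F_1,F$ edge-critical of chromatic numbers $r-s$ and $s+1$, and these hold by Step~1. With this choice, $J_0=F_1+K_1=K_{r-s+1}$, so Tur\'an's theorem gives
\[
\ex(q,J_0)=t_{r-s}(q)
\]
exactly, for \emph{every} $q$. Substituting into the first bound of Theorem~\ref{thm:2} yields
\[
e(G)\le m(n-m)+t_s(m)+t_{r-s}(n-m)
\]
for all $n\ge n_0$, with no lower-bound requirement on $n-m$. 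This is the main point: the finite correction term disappears because $J_0$ is a clique.

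\textbf{Step 3: the equality case.} Theorem~\ref{thm:2} characterizes equality only when $n-m\ge q_0$, so for bounded $q=n-m$ we must revisit its proof. When equality holds, the inequality $e(G[V\setminus M])\le \ex(q,K_{r-s+1})$ must be tight. By uniqueness in Tur\'an's theorem, $G[V\setminus M]$ is then exactly $T_{r-s}(q)$, and the remaining inequalities in the proof (the cross edges count $m(n-m)$ and $G[M]$ has $t_s(m)$ edges) force the stated complete $r$-partite structure. I expect this to be the main obstacle. One must check that the equality analysis in Theorem~\ref{thm:2} invokes $q\ge q_0$ only to identify the extremal graph for $J_0$, which Tur\'an's uniqueness now supplies for all $q$. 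Conversely, the described complete $r$-partite graph contains no $H$, since $\chi(H)=r+1$. Its set $M$, the union of the $s$ parts, induces an $s$-partite graph, which is $F$-free because $\chi(F)=s+1$. Its edge count attains the bound.

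\textbf{Step 4: the boundary cases.} When $r=s+1$, we have $F_1=K_1$ and $J_0=K_2$. When $s=1$, the core $F$ is $K_2$. In both situations the hypotheses $s\ge 2$ and $r-s\ge 2$ of Theorem~\ref{thm:2} fail, and I would rerun its proof directly. For $r=s+1$, $G[V\setminus M]$ must be nearly empty: a vertex of $V\setminus M$ that is adjacent to almost all of $M$ and also to another such vertex would create $K_1+F$-type structures. For $s=1$, $M$ is an independent set, and the argument reduces to Theorem~\ref{thm:1}-style counting combined with stability. Following the paper's remark, these modifications are routine, and I would only indicate the changes.
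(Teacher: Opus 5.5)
Your derivation of the bound is the paper's route. The paper only notes that with $F_1=K_{r-s}$ one has $J_0=K_{r-s+1}$, so $\ex(q,J_0)=t_{r-s}(q)$ for every $q$ by Tur\'an's theorem, and says Theorem~\ref{thm:2} implies the corollary. You are right that the proof of Theorem~\ref{thm:2} uses only $H\subseteq F_1+F$ and edge-criticality, so the pair from Lemma~\ref{lem:double-critical-join} does not matter. You are also right that Theorem~\ref{thm:2} characterizes equality only for $n-m\ge q_0$, which the paper passes over.

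Your Step~3, however, does not close this gap. Write $X=V(G)\setminus M$ and let $A,B,C$ be as in the bounded-complement argument. That argument never proves $e(G[X])\le \ex(q,J_0)$ on its own, and $G[X]$ may contain $K_{r-s+1}$ when cross edges are missing. It only proves $C\le A+B$. So there is no separate inequality that ``must be tight.'' Nor can you invoke Tur\'an's uniqueness before knowing that $G[X]$ is $K_{r-s+1}$-free, which is not a hypothesis. Your diagnosis is also off. For $q\ge q_0$, the equality analysis of Theorem~\ref{thm:2} runs through the induction, and its boundary step needs Simonovits' theorem for $J$ on orders about $rq/(r-s)$, so $q\ge q_0$ cannot simply be dropped there. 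For $q<q_0$ there is no equality analysis at all to inspect.

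The repair uses the same ingredients.
\begin{itemize}
\item Under equality, $C=A+B$. The contradiction argument used only $C>0$ and $A,B\le C\le\binom{q}{2}$, so it still excludes $C>0$; hence $A=B=C=0$.
\item Then $G[M]\cong T_s(m)$ by Simonovits' uniqueness for $F$, since $m$ is large, and $M$ is complete to $X$.
\item A copy of $K_{r-s+1}$ in $G[X]$, together with a copy of $F-u$ in $T_s(m)$, would give $K_{r-s}+F\supseteq H$. So $G[X]$ is $K_{r-s+1}$-free with $t_{r-s}(q)$ edges, and only now does Tur\'an's uniqueness give $G[X]\cong T_{r-s}(q)$.
\end{itemize}

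Step~4 is heavier than needed. For $r=s+1$, the proof of Theorem~\ref{thm:2} runs verbatim with $F_1=K_1$: $J=K_1+F$ is edge-critical, $J_0=K_2$, and $\ex(q,K_2)=0=t_1(q)$. For $s=1$ it also runs verbatim, because $F-u$ is edgeless; here $F$ is an edge plus possibly isolated vertices, not necessarily $K_2$. No stability argument is required in either case.
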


The maximum-degree condition $\Delta(G)\ge m$ implies a special case of our
sparse-set condition. If $v$ is a vertex of degree at least $m$ and
$M\subseteq N(v)$ has size $m$, then $G[M]$ is $H-x$-free for every
$x\in V(H)$; otherwise, $v$ together with a copy of $H-x$ in $M$ would give a
copy of $H$. Thus our framework extends the maximum-degree setting.

Our methods also extend asymptotically to general forbidden graphs. In this setting, the large prescribed set is not required to be $K_{s+1}$-free; instead, it is enough to assume that it spans few copies of a fixed graph $F$ with $\chi(F)<\chi(H)$.

In the next theorem, we assume that the constant $c$ is strictly larger than
\[
   \frac{\chi(F)-1}{\chi(H)-1}.
\]
The auxiliary constants in the statement depend on the size of the gap
\[
   c-\frac{\chi(F)-1}{\chi(H)-1}.
\]
In particular, as this gap tends to zero, these constants may deteriorate rapidly.

\begin{theorem}\label{thm:3}
Let $H$ and $F$ be two fixed graphs with $\chi(H)>\chi(F)\ge 2$, let
\[
   c>\frac{\chi(F)-1}{\chi(H)-1},
\]
and let $\varepsilon>0$. Then there exist constants $\eta>0$ and $n_0$ such that the following holds for all $n\ge n_0$.

Suppose that $G$ is an $H$-free graph on $n$ vertices and contains a vertex subset $M$ of size $m$ with
\[
   \frac{m}{n}\ge c
\]
such that $G[M]$ contains at most
\[
   \eta m^{v(F)}
\]
copies of $F$. Then
\[
   e(G)
   \le
   m(n-m)
   +\left(1-\frac{1}{\chi(F)-1}\right)\binom{m}{2}
   +\left(1-\frac{1}{\chi(H)-\chi(F)}\right)\binom{n-m}{2}
   +\varepsilon n^2.
\]
\end{theorem}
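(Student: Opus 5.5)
We will prove Theorem~\ref{thm:3} by regularity and a weighted Tur\'an-type optimization on the reduced graph. Put $h=\chi(H)$ and $f=\chi(F)$. Apply the Szemer\'edi regularity lemma to $G$ with a small parameter $\delta$, using an initial partition $\{M, V\setminus M\}$ so that every cluster lies inside $M$ or inside $V\setminus M$. Form the reduced graph $R$ on clusters $\mathcal M\cup\mathcal Q$. An edge of $R$ is a $\delta$-regular pair of density at least $d$, with $d\ll\varepsilon$. The cleaned graph $G'$ loses at most $\varepsilon n^2/2$ edges, so it suffices to bound $e(G')$, which is at most $(n/k)^2 e(R)$ plus an error term.

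Next we transfer the two hypotheses to $R$. By the counting/embedding lemma, $R$ is $K_h$-free, since otherwise $G$ contains $H$. Likewise $R[\mathcal M]$ is $K_f$-free: a $K_f$ in $R[\mathcal M]$ yields at least $c_0 (m/k)^{v(F)}$ copies of $F$ in $G[M]$, where $c_0$ depends only on $d$, $\delta$ and $F$. Choosing $\eta$ smaller than this contradicts the sparsity assumption. This is the step where $\eta$ depends on the regularity parameters, and hence on the gap in $c$. It remains to prove a purely extremal statement. Suppose $R$ is $K_h$-free on $a+b$ vertices, with $R[A]$ being $K_f$-free, $|A|=a$, and $a/(a+b)\ge c'$ for some $c'$ slightly below $c$ but still above $(f-1)/(h-1)$. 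Then
\[
 e(R)\le ab+\Bigl(1-\tfrac1{f-1}\Bigr)\tfrac{a^2}{2}+\Bigl(1-\tfrac1{h-f}\Bigr)\tfrac{b^2}{2}+\varepsilon'(a+b)^2 .
\]
Scaling back gives the theorem.

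The core step, and the main obstacle, is this extremal statement. It is an asymptotic version of Theorem~\ref{thm:1} with $r=h-1$ and $s=f-1$, but Theorem~\ref{thm:1} requires $a\ge \lceil sN/r\rceil$, while we only have $a/N\ge c'>s/r$. That condition is fine. To reduce to Theorem~\ref{thm:1}, we need $\alpha_s(R)\ge a$, which holds since $R[A]$ is $K_{s+1}$-free. Theorem~\ref{thm:1} then applies directly with $m=a$ and $n=N=a+b$, provided $a\ge\lceil sN/r\rceil$. This holds once $k$ is large, because $c'>s/r$ strictly; the gap absorbs the rounding and the cluster-size discrepancy between $m$ and $|\mathcal M|\,n/k$. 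Finally, $t_s(a)\le(1-1/s)a^2/2$ and $t_{r-s}(b)\le(1-1/(r-s))b^2/2$. Converting $a^2/2$ to $\binom{m}{2}$ costs only $O(n)$. The remaining care is bookkeeping: we must ensure the exceptional cluster and the clusters straddling $M$ cost $O(\delta n^2)$ edges, and that the orders of constants ($\delta\ll d\ll\varepsilon$, then $\eta$, then $n_0$) are consistent.
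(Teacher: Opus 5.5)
Your proof is correct, but it differs from the paper's at two points.

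First, the sparsity hypothesis. The paper applies the removal lemma to delete at most $\frac{\varepsilon}{8}m^2$ edges of $G[M]$, so that in the resulting $G'$ the set $M$ is genuinely $F$-free. It then regularizes $G'$ and needs only the embedding lemma on the reduced graph. You regularize $G$ itself and use the counting lemma: a $K_f$ in $R[\mathcal M]$ gives at least $c_0\ell^{v(F)}\ge c_0\bigl((1-\delta)m/k_{\max}\bigr)^{v(F)}$ copies of $F$ in $G[M]$, where $k_{\max}(\delta)$ is the regularity lemma's upper bound on the number of clusters. You should state this explicitly, since it is what makes $\eta$ independent of $G$. Your route avoids the removal lemma, at the cost that your $\eta$ depends on $c$ through $\delta<c-\frac{\chi(F)-1}{\chi(H)-1}$. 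The paper's $\eta$ depends only on $F$ and $\varepsilon$.

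Second, the final comparison. The paper passes to normalized densities and applies Theorem~\ref{thm:1} with the shifted value $q=\lceil (x-\xi)k\rceil\le|I|$. It then needs Lemma~\ref{lem:limit} uniformly, the monotonicity of $\psi$ on $[p/r,1]$, and a continuity margin $\xi$. You apply Theorem~\ref{thm:1} with $m=a=|\mathcal M|$ directly, which is valid since $a/k\ge m/n-\delta>s/r$ and $a$ is an integer. You then scale back in absolute terms. The expression $ab+\lambda_1a^2/2+\lambda_2b^2/2$, with $\lambda_1=1-\frac1s\ge 0$ and $\lambda_2=1-\frac1{r-s}\ge 0$, is increasing in $a$ and in $b$, and $a\ell\le m$, $b\ell\le n-m$. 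So multiplying by $\ell^2$ lands on the target bound up to the $O(n)$ you already noted. There is no loss from $V_0$ or from $a/k$ falling slightly below $m/n$.

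This is cleaner than the paper's bookkeeping and makes your $c'$ and $\varepsilon'$ unnecessary. The only error terms are the usual ones from $V_0$, irregular pairs, sparse pairs, and edges inside clusters. The last is at most $n^2/(2k)$ and needs the lower bound on $k$ from the regularity lemma, which you did not mention.
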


\begin{remark}
The upper bound in Theorem~\ref{thm:3} is asymptotically best possible. Let
\[
   r=\chi(H)-1,
   \qquad
   p=\chi(F)-1.
\]
Consider the complete $r$-partite graph whose vertex classes can be divided into two groups: $p$ parts with total size $m$, as equal as possible, and $r-p$ parts with total size $n-m$, as equal as possible. Let $M$ be the union of the first $p$ parts. Then $G[M]$ is $p$-partite and hence $F$-free, while $G$ is $r$-partite and hence $H$-free. Moreover,
\[
   e(G)=m(n-m)+t_p(m)+t_{r-p}(n-m),
\]
which agrees with the leading term in Theorem~\ref{thm:3}, up to an $O(n)$ error.
\end{remark}

Several related questions have been investigated recently. See~\cite{LRW} for the effect of a large-degree vertex on the Andr\'asfai--Erd\H{o}s--S\'os theorem, and see~\cite{HHLLYZ} for analogous questions on Tur\'an problems for degenerate hypergraphs under a large-degree condition.

The precise formulation of Theorem~\ref{thm:1} enables a simple inductive proof. Theorem~\ref{thm:2} is proved by combining the same induction with Simonovits' color-critical Tur\'an theorem and a bounded-complement argument. Theorem~\ref{thm:3} then follows from Theorem~\ref{thm:1} via Szemer\'edi's Regularity Lemma and the Graph Removal Lemma.

The remainder of the paper is organized as follows. In Section~2, we collect the necessary notation and preliminary results. Section~3 is devoted to the proofs of Theorems~\ref{thm:1} and~\ref{thm:2}, while Section~4 contains the proof of Theorem~\ref{thm:3}. We conclude in Section~5 with several open problems.

\section{Notation and Preliminary Results}

Throughout the paper, all graphs are finite, simple, and undirected. For a graph $G$, we write $V(G)$ and $E(G)$ for its vertex set and edge set, respectively, and set
\[
   v(G)=|V(G)|,
   \qquad
   e(G)=|E(G)|.
\]

For a vertex $v\in V(G)$, the \emph{neighborhood} of $v$ is
\[
   N_G(v)=\{u\in V(G): uv\in E(G)\},
\]
and the \emph{degree} of $v$ is
\[
   d_G(v)=|N_G(v)|.
\]
When the graph is clear from context, we omit the subscript and simply write $N(v)$ and $d(v)$.

For a subset $X\subseteq V(G)$, let $G[X]$ denote the subgraph of $G$ induced by $X$. We write
\[
   e(X)=e(G[X])
\]
for the number of edges induced by $X$. For $s\ge 1$, we define
\[
   \alpha_s(G)
   =
   \max\bigl\{|M|: M\subseteq V(G) \text{ and } G[M] \text{ is } K_{s+1}\text{-free}\bigr\}.
\]
In particular, $\alpha_1(G)=\alpha(G)$ is the independence number of $G$.

For two disjoint subsets $X,Y\subseteq V(G)$, the \emph{induced bipartite subgraph} between $X$ and $Y$, denoted by $G[X,Y]$, is the bipartite graph with vertex classes $X$ and $Y$ and edge set
\[
   E(G[X,Y])
   =
   \{uv\in E(G): u\in X,\ v\in Y\}.
\]
We write
\[
   e(X,Y)=|E(G[X,Y])|.
\]

A graph is called \emph{complete $r$-partite} if its vertex set can be partitioned into $r$ independent sets
\[
   V(G)=V_1\cup\cdots\cup V_r
\]
such that every two vertices from distinct parts are adjacent. We allow parts of size zero when convenient.

For two vertex-disjoint graphs $A$ and $B$, their \emph{join}, denoted $A+B$, is obtained from their disjoint union by adding all edges between $V(A)$ and $V(B)$.

The \emph{Tur\'an graph} $T_r(n)$ is the complete $r$-partite graph on $n$ vertices whose vertex classes form an equitable partition; that is, the sizes of any two parts differ by at most one. Let
\[
   t_r(n)=e(T_r(n))
\]
denote the number of edges in $T_r(n)$. We use the convention $t_r(0)=0$.

For a graph $J$ and a positive integer $q$, let $J^q$ denote the $q$-blow-up of $J$: each vertex of $J$ is replaced by an independent set of size $q$, and each edge of $J$ is replaced by a complete bipartite graph between the corresponding independent sets. In particular, $K_s^q$ denotes the complete $s$-partite graph with all parts of size $q$.

We will use the following standard recurrence for Tur\'an numbers.

\begin{lemma}\label{lem:turan-difference}
Let $r\ge 1$, and write $n=qr+a$, where $0\le a<r$. Then
\[
   t_r(n+1)-t_r(n)=n-\left\lfloor \frac{n}{r}\right\rfloor.
\]
\end{lemma}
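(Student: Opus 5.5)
We prove the identity by locating the new vertex in the Tur\'an graph. Adding one vertex to $T_r(n)$ in the right way produces $T_r(n+1)$, and the increase in edge count is the degree of that vertex.

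\textbf{Step 1: the part sizes of $T_r(n)$.} Write $n=qr+a$ with $0\le a<r$. Then $T_r(n)$ has $a$ parts of size $q+1$ and $r-a$ parts of size $q$. The number of parts of size $q$ is $r-a\ge 1$, so at least one part has size $q=\lfloor n/r\rfloor$.

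\textbf{Step 2: add a vertex to a smallest part.} Add a new vertex $v$ to one of the parts of size $q$ and join it to every vertex outside that part. The resulting part sizes are $a+1$ parts of size $q+1$ and $r-a-1$ parts of size $q$. Any two of these sizes differ by at most one, so the new graph is a complete $r$-partite graph with an equitable partition on $n+1$ vertices, namely $T_r(n+1)$. This also covers the case $a=r-1$: then every part has size $q+1$ and $n+1=(q+1)r$.

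\textbf{Step 3: count the new edges.} The only new edges are those at $v$, and $v$ is adjacent to exactly the $n-q$ vertices outside its part. Therefore
\[
t_r(n+1)-t_r(n)=n-q=n-\left\lfloor \frac{n}{r}\right\rfloor,
\]
as claimed.

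\textbf{Main point to check.} The only thing that needs care is that the graph built in Step 2 really is the Tur\'an graph. This follows because $T_r(n)$ is determined up to isomorphism by its equitable part sizes, and the sizes produced in Step 2 are equitable.
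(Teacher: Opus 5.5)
Your proof is correct and follows the paper's argument exactly: add the new vertex to a class of size $q=\lfloor n/r\rfloor$ in $T_r(n)$, so it gains exactly $n-q$ neighbours. Your explicit check that the enlarged partition is still equitable, including the case $a=r-1$, is a detail the paper leaves implicit.
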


\begin{proof}
The vertex classes of $T_r(n)$ have sizes
\[
   q+1,\ldots,q+1,q,\ldots,q,
\]
with exactly $a$ classes of size $q+1$. To obtain $T_r(n+1)$, add one vertex to a smallest class, which has size $q$. The new vertex is adjacent to every vertex outside that class and therefore contributes
\[
   n-q
   =
   n-\left\lfloor\frac{n}{r}\right\rfloor
\]
new edges. This proves the recurrence.
\end{proof}

For integers $r>s\ge 1$, define
\begin{equation}\label{eq:tau}
   \tau_{r,s}(n,m)=m(n-m)+t_s(m)+t_{r-s}(n-m).
\end{equation}

We will use the following asymptotic form of this quantity.

\begin{lemma}\label{lem:limit}
Let $r>s\ge 1$, and let $m=m(n)$ satisfy $m/n\to c\in[0,1]$. Then
\[
   \lim_{n\to\infty}\frac{\tau_{r,s}(n,m)}{\binom{n}{2}}
   =
   1-\frac{c^2}{s}-\frac{(1-c)^2}{r-s}.
\]
\end{lemma}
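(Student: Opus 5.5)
The limit follows by dividing each of the three summands in \eqref{eq:tau} by $\binom{n}{2}$ and using a uniform estimate for Tur\'an numbers. The first step is the standard bound
\[
   \Bigl(1-\frac1k\Bigr)\frac{N^2}{2}-\frac{k}{8}\le t_k(N)\le \Bigl(1-\frac1k\Bigr)\frac{N^2}{2}.
\]
For the upper bound, write the part sizes of $T_k(N)$ as $N_1,\dots,N_k$. Then $t_k(N)=\tfrac12\bigl(N^2-\sum_i N_i^2\bigr)$, and by Cauchy--Schwarz $\sum_i N_i^2\ge N^2/k$. For the lower bound, the parts differ from $N/k$ by less than one, so $\sum_i N_i^2\le N^2/k+k/4$. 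In particular,
\[
   t_k(N)=\Bigl(1-\frac1k\Bigr)\frac{N^2}{2}+O_k(1)
\]
uniformly in $N\ge 0$, and this includes $N=0$ under the convention $t_k(0)=0$. Alternatively, one could sum Lemma~\ref{lem:turan-difference}, but the direct part-size computation is simpler.

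Next apply this with $k=s$, $N=m$ and with $k=r-s$, $N=n-m$. This gives
\[
   \tau_{r,s}(n,m)=m(n-m)+\Bigl(1-\frac1s\Bigr)\frac{m^2}{2}+\Bigl(1-\frac1{r-s}\Bigr)\frac{(n-m)^2}{2}+O_r(1).
\]
Divide by $\binom n2=\tfrac{n^2}{2}(1+o(1))$ and use $m/n\to c$ and $(n-m)/n\to 1-c$; the $O_r(1)$ term vanishes in the limit. The limit is therefore
\[
   2c(1-c)+\Bigl(1-\frac1s\Bigr)c^2+\Bigl(1-\frac1{r-s}\Bigr)(1-c)^2
   =\bigl(c+(1-c)\bigr)^2-\frac{c^2}{s}-\frac{(1-c)^2}{r-s},
\]
which equals $1-\frac{c^2}{s}-\frac{(1-c)^2}{r-s}$.

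The only point needing care is uniformity at the endpoints $c\in\{0,1\}$, where $m$ or $n-m$ may stay bounded or even be zero. The additive $O_r(1)$ error is independent of $N$, so these cases are covered. The rest is routine algebra.
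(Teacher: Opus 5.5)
Your proof is correct and follows the same route as the paper: write $\tau_{r,s}(n,m)$ termwise, replace each Tur\'an number by its quadratic main term, divide by $\binom{n}{2}$, and pass to the limit using $m/n\to c$. Your proof is more explicit than the paper's, which silently absorbs the uniform estimate $t_k(N)=\bigl(1-\frac1k\bigr)\frac{N^2}{2}+O_k(1)$ into an $o(1)$ term. One small point: the observation that the parts differ from $N/k$ by less than one only bounds the error by $k/2$. The sharper bound $k/8$ comes from the exact deviation sum $a(k-a)/k$, where $N=qk+a$ with $0\le a<k$. Either bound suffices for the limit.
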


\begin{proof}
Since $m/n\to c$, we have
\begin{align*}
   \frac{\tau_{r,s}(n,m)}{\binom{n}{2}}
   &=
   \frac{2m(n-m)}{n^2}
   +\left(1-\frac{1}{s}\right)\frac{m^2}{n^2}
   +\left(1-\frac{1}{r-s}\right)\frac{(n-m)^2}{n^2}
   +o(1) \\
   &=
   2c(1-c)
   +\left(1-\frac{1}{s}\right)c^2
   +\left(1-\frac{1}{r-s}\right)(1-c)^2
   +o(1) \\
   &=
   1-\frac{c^2}{s}-\frac{(1-c)^2}{r-s}+o(1).
\end{align*}
The claimed limit follows.
\end{proof}

\begin{lemma}\label{lem:low-degree}
Let $s\ge 1$, and let $G$ be a $K_{s+1}$-free graph on $N\ge 1$ vertices. Then $G$ has a vertex of degree at most
\[
   t_s(N)-t_s(N-1).
\]
\end{lemma}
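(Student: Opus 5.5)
The plan is to argue by contradiction, combining Tur\'an's theorem with the recurrence of Lemma~\ref{lem:turan-difference}. Suppose every vertex of $G$ has degree greater than $t_s(N)-t_s(N-1)$. Summing degrees gives
\[
   2e(G) > N\bigl(t_s(N)-t_s(N-1)\bigr),
\]
so it suffices to show that $N\bigl(t_s(N)-t_s(N-1)\bigr)\ge 2t_s(N)$. Tur\'an's theorem gives $e(G)\le t_s(N)$, and these two facts would then contradict each other. Equivalently, we want the minimum degree of $T_s(N)$ to be at most its average degree, and to identify that minimum degree with $t_s(N)-t_s(N-1)$.

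For the identification, apply Lemma~\ref{lem:turan-difference} with $n=N-1$:
\[
   t_s(N)-t_s(N-1)=(N-1)-\left\lfloor\frac{N-1}{s}\right\rfloor .
\]
Write $N=qs+a$ with $0\le a<s$. If $a\ge 1$, then $\lfloor (N-1)/s\rfloor=q$, and the difference equals $N-1-q$. This is the degree of a vertex in a class of size $q+1$, which is the minimum degree of $T_s(N)$. If $a=0$, then $\lfloor (N-1)/s\rfloor=q-1$, and the difference equals $N-q$. This is the common degree of every vertex in $T_s(N)$, whose classes all have size $q$. In both cases $t_s(N)-t_s(N-1)=\delta(T_s(N))$.

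Since the minimum degree never exceeds the average degree,
\[
   N\,\delta(T_s(N))\le 2e(T_s(N))=2t_s(N).
\]
Hence the assumption that every vertex has degree greater than $\delta(T_s(N))$ gives $2e(G)>2t_s(N)$, contradicting Tur\'an's theorem. This proves the lemma.

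The only point requiring care is the case split on $N \bmod s$ in the identification step. For $s=1$ the statement reduces to $G$ being edgeless with the bound $0$, which the argument covers.

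A more hands-on alternative would be induction on $N$ via deletion of a minimum-degree vertex. The averaging argument above is shorter, so it is the one I would use.
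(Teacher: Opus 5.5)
\textbf{There is a genuine gap: your final step uses the key inequality in the wrong direction.}

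Write $\delta=t_s(N)-t_s(N-1)$, which you correctly identify with $\delta(T_s(N))$. From $d(v)>\delta$ for every $v$ you get $2e(G)>N\delta$. As you say yourself, contradicting Tur\'an's theorem then requires $N\delta\ge 2t_s(N)$.

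But the fact you invoke, that the minimum degree never exceeds the average degree, gives the opposite inequality $N\delta\le 2t_s(N)$. The two facts $2e(G)>N\delta$ and $N\delta\le 2t_s(N)$ together say nothing about how $2e(G)$ compares with $2t_s(N)$. So your ``Equivalently'' is not an equivalence.

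Since $N\delta\le 2t_s(N)$ always holds, the inequality you need can hold only when $T_s(N)$ is regular, and it fails in general. For $s=2$ and $N=3$ you have $\delta=1$, so $N\delta=3<4=2t_2(3)$, and the chain of inequalities breaks.

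The repair is to keep the integer slack you discarded. Each vertex has degree at least $\delta+1$, so $2e(G)\ge N(\delta+1)$. What you must then show is the strict inequality $2t_s(N)<N(\delta+1)$, that is, that the average degree of $T_s(N)$ is less than $\delta(T_s(N))+1$.

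This holds because $T_s(N)$ is nearly regular. Every vertex has degree $\delta$ or $\delta+1$, the latter occurring only when $a\ge 1$ (in the classes of size $q$), and some vertex has degree exactly $\delta$. Explicitly, $N(\delta+1)-2t_s(N)$ equals $sq$ if $a=0$ and $a(q+1)$ if $a\ge 1$. Up to a factor of $2$, this is exactly the computation in the paper's proof.

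With this correction your argument becomes the paper's. Your identification of $t_s(N)-t_s(N-1)$ with the minimum degree of $T_s(N)$ is a nice way to see why that computation comes out positive.
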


\begin{proof}
By Lemma~\ref{lem:turan-difference},
\[
   t_s(N)-t_s(N-1)
   =
   N-1-\left\lfloor\frac{N-1}{s}\right\rfloor.
\]
Suppose, for a contradiction, that every vertex of $G$ has degree at least $t_s(N)-t_s(N-1)+1$. Then
\[
   e(G)\geq \frac{N}{2}\left(N-\left\lfloor\frac{N-1}{s}\right\rfloor\right).
\]
It remains to note that the expression on the right is larger than $t_s(N)$.

Write $N=sq+a$, where $0\le a<s$. If $a=0$, then
\begin{align*}
    &\frac{N}{2}\left(N-\left\lfloor\frac{N-1}{s}\right\rfloor\right)-t_s(N) \\
    &\qquad = \frac{sq}{2}(sq-q+1)-q^2\binom{s}{2}
    =\frac{sq}{2}>0.
\end{align*}
If $a>0$, then
\begin{align*}
    &\frac{N}{2}\left(N-\left\lfloor\frac{N-1}{s}\right\rfloor\right)-t_s(N) \\
    &\qquad = \frac{sq+a}{2}(sq+a-q)
    -\left(\binom{sq+a}{2}-(s-a)\binom{q}{2}-a\binom{q+1}{2}\right) \\
    &\qquad =\frac{a(q+1)}{2}>0.
\end{align*}
Thus $e(G)>t_s(N)$, contradicting Tur\'an's theorem. Hence some vertex of $G$ has degree at most $t_s(N)-t_s(N-1)$.
\end{proof}

\begin{lemma}\label{lem:threshold-turan}
Let $r>s\ge 1$, and let $N\ge 1$. If
\[
   m\in\left\{\left\lfloor\frac{sN}{r}\right\rfloor,\left\lceil\frac{sN}{r}\right\rceil\right\},
\]
then
\[
   \tau_{r,s}(N,m)=t_r(N).
\]
\end{lemma}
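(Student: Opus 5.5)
The plan is to realize $\tau_{r,s}(N,m)$ as the edge count of an explicit complete $r$-partite graph and compare it with $T_r(N)$.

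By definition, $\tau_{r,s}(N,m)$ is the number of edges of the complete $r$-partite graph obtained by splitting $m$ vertices into $s$ nearly equal classes and $N-m$ vertices into $r-s$ nearly equal classes. Indeed, $t_s(m)$ counts the edges among the first $s$ classes, $t_{r-s}(N-m)$ counts those among the last $r-s$ classes, and every cross pair contributes to $m(N-m)$. Since the number of edges of a complete $r$-partite graph depends only on its multiset of class sizes, it suffices to show the following: when $m\in\{\lfloor sN/r\rfloor,\lceil sN/r\rceil\}$, the combined multiset of class sizes is an equitable partition of $N$, meaning all sizes differ by at most one. It then coincides with the class sizes of $T_r(N)$, and the identity follows.

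Write $N=qr+a$ with $0\le a<r$. The classes of $T_r(N)$ consist of $a$ classes of size $q+1$ and $r-a$ classes of size $q$. The sum of any $s$ of these classes ranges exactly over the integers from $sq+\max(0,s-(r-a))$ to $sq+\min(s,a)$, obtained by choosing $j$ classes of size $q+1$ with $\max(0,a+s-r)\le j\le\min(s,a)$. So the key step is to check that both $\lfloor sN/r\rfloor$ and $\lceil sN/r\rceil$ lie in this interval. We have $sN/r=sq+sa/r$, and $sa/r\le \min(s,a)$ because $a<r$ and $s<r$, so $\lceil sN/r\rceil\le sq+\min(s,a)$, since the right side is an integer. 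Similarly, $sa/r\ge a+s-r$ is equivalent to $(r-s)(r-a)\ge 0$, so $\lfloor sN/r\rfloor\ge sq+\max(0,a+s-r)$.

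Hence, for such $m$, we may choose $j$ classes of size $q+1$ and $s-j$ of size $q$ summing to $m$. The remaining $r-s$ classes have sizes in $\{q,q+1\}$ summing to $N-m$. Within each group the sizes differ by at most one, so each group is an equitable partition, and their edge counts are $t_s(m)$ and $t_{r-s}(N-m)$ respectively. The combined graph is therefore $T_r(N)$, and $\tau_{r,s}(N,m)=t_r(N)$.

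The only delicate point is the integer-interval verification in the key step, together with the uniqueness of equitable partitions. The latter ensures that the equitable split of $m$ into $s$ parts is exactly the multiset we selected, and it is standard. The degenerate cases $N<r$, where some classes may be empty with $q=0$, are covered by the same argument, since parts of size zero are allowed.
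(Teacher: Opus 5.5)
Your proof is correct and is essentially the paper's argument: write $N=qr+a$, select $j$ classes of size $q+1$ and $s-j$ classes of size $q$ from $T_r(N)$ so that $sq+j=m$, and read off $t_r(N)=m(N-m)+t_s(m)+t_{r-s}(N-m)$. Your check that $\lfloor sa/r\rfloor\ge a+s-r$ (via $(r-s)(r-a)\ge 0$) guarantees there are enough classes of size $q$ to pick $s-j$ of them; the paper leaves this implicit, recording only $0\le j\le\min\{s,a\}$.
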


\begin{proof}
Write $N=qr+a$, where $0\le a<r$. In $T_r(N)$, there are $a$ parts of size $q+1$ and $r-a$ parts of size $q$. For
\[
   j\in\left\{\left\lfloor\frac{sa}{r}\right\rfloor,\left\lceil\frac{sa}{r}\right\rceil\right\},
\]
we have $0\le j\le \min\{s,a\}$. Thus we may choose $j$ of the parts of size $q+1$ and $s-j$ of the parts of size $q$. Their total size is $sq+j$, which is either $\lfloor sN/r\rfloor$ or $\lceil sN/r\rceil$. The edges inside these $s$ selected parts form $T_s(m)$, the edges inside the remaining $r-s$ parts form $T_{r-s}(N-m)$, and all cross-edges between the two groups are present. Hence
\[
   t_r(N)=m(N-m)+t_s(m)+t_{r-s}(N-m)=\tau_{r,s}(N,m).
\]
\end{proof}

\begin{theorem}[Simonovits' color-critical Tur\'an theorem~\cite{Simonovits1968}]\label{thm:simonovits}
Let $F$ be a color-critical graph with $\chi(F)=r+1$. Then there exists $n_0=n_0(F)$ such that, for every $n\ge n_0$, every $F$-free graph $G$ on $n$ vertices satisfies
\[
   e(G)\le t_r(n).
\]
Moreover, equality holds if and only if $G\cong T_r(n)$. Equivalently,
\[
   \operatorname{ex}(n,F)=t_r(n)
\]
for all sufficiently large $n$, and the unique extremal graph is the Tur\'an graph $T_r(n)$.
\end{theorem}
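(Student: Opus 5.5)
The statement immediately above is Simonovits' color-critical Tur\'an theorem, which the paper quotes from~\cite{Simonovits1968}. It is not proved here. Still, here is the route I would take to prove it from standard tools: the Erd\H{o}s--Stone--Simonovits theorem, the Erd\H{o}s--Simonovits stability theorem, and Tur\'an's theorem.

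\emph{Setup.} Let $F$ be color-critical with $\chi(F)=r+1$, critical edge $xy$, and $h=v(F)$. Then $F-xy$ is $r$-colorable, with $x,y$ in the same color class. Hence $F$ embeds into the graph obtained from the blow-up $K_r^{h}$ by adding a single edge inside one class. The same holds if some vertex $v$ outside the classes has at least one neighbour in every class of such a blow-up and also in its own class.

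\emph{Stability.} Let $G$ be $F$-free with $e(G)\ge t_r(n)$. The stability theorem shows that $G$ is $o(n^2)$-close to $T_r(n)$. Take a max-cut $r$-partition $V_1,\dots,V_r$. Each part then has size $n/r+o(n)$, and the parts span only $o(n^2)$ edges in total.

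\emph{Cleaning.} I would first reduce to minimum degree at least $(1-1/r-\delta)n$. To do this, repeatedly delete vertices of smaller degree. A standard counting argument shows that only $O(1)$ deletions can occur: otherwise the remaining graph would exceed the Erd\H{o}s--Stone bound, and hence contain $F$. Moreover, if any deletion happens, a direct count shows $e(G)<t_r(n)$.

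\emph{Main obstacle: showing each $V_i$ is independent.} I expect this to be the hardest step. Suppose $uv$ is an edge inside $V_1$. Using the high minimum degree and max-cut optimality, $u$ and $v$ each have at most $\delta' n$ non-neighbours in every other class. So $u$ and $v$ have a common neighbourhood of linear size in each $V_j$ with $j\ge 2$. Inside these common neighbourhoods, a K\H{o}v\'ari--S\'os--Tur\'an / Erd\H{o}s--Stone type count gives a copy of $K_{r-1}^{h}$. This is possible because there are few missing cross edges, so the cross edges are almost complete. Together with $u$, $v$, and linear-size sets of common neighbours inside $V_1$, this forms a copy of $K_r^{h}$ plus an edge, and therefore a copy of $F$, a contradiction.

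\emph{Conclusion.} Each $V_i$ is independent, so $G$ is $r$-partite, and $e(G)\le t_r(n)$. Equality forces $G$ to be complete $r$-partite with balanced parts, that is, $G\cong T_r(n)$.
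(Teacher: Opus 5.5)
The paper gives no proof of this statement. It is quoted from \cite{Simonovits1968} and used as a black box, so your sketch has to stand on its own. Its architecture (stability, a max-cut partition, independence of the parts, deletion of low-degree vertices) is the standard modern route. However, the key step is not justified as written.

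You claim that high minimum degree and max-cut optimality force $u$ and $v$ to have at most $\delta' n$ non-neighbours in every other class. They do not. Max-cut only gives $d_{V_1}(u)\le d_{V_j}(u)$ for all $j$. A vertex $u\in V_1$ with $d(u)\approx(1-1/r)n$ and about $d(u)/r$ neighbours in each of $V_1,\dots,V_r$ satisfies both conditions, yet misses about $n/r^2$ vertices of each other class. Stability bounds only $e(G[V_1])=o(n^2)$, which is an average; it says nothing about the own-class degree of a particular endpoint of an edge.

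Since $\sum_{j\ge 2}|V_j\setminus N(u)|\le d_{V_1}(u)+(\delta+o(1))n$, you first need the claim that every vertex has fewer than $\beta n$ neighbours in its own part. This requires its own argument. Suppose $w\in V_i$ had $d_{V_i}(w)\ge\beta n$. Then max-cut gives $d_{V_j}(w)\ge\beta n$ for every $j$. As only $o(n^2)$ cross pairs are non-edges, the sets $N(w)\cap V_1,\dots,N(w)\cap V_r$ contain a copy of $K_r^h$ with one class in each. Together with $w$ this yields $K_1+K_r^h\supseteq F$: map $x$ to $w$, since $F-x\subseteq F-xy$ is $r$-colourable.

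Your setup sentence gestures at this, but it needs $h$ (in fact linearly many) neighbours in each class, not ``at least one'', and you never use it. With the claim in place, your embedding through $u,v$ works, provided the $h$ vertices of $V_1$ are chosen together with the $K_{r-1}^h$ by the same counting.

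The cleaning step is also misstated. Erd\H{o}s--Stone bounds the excess $e(G_k)-t_r(n-k)$ only by $o(n^2)$, while each deletion adds about $\delta n$ to it. So you get $k=o(n)$ deletions, not $O(1)$. Nor is there a ``direct count'' giving $e(G)<t_r(n)$. That inequality follows only after running the structural argument on the cleaned graph $G_k$. This graph still has $n-o(n)$ vertices, minimum degree at least $(1-1/r-\delta)(n-k)$, and $e(G_k)>t_r(n-k)$ if $k\ge 1$. The argument shows $G_k$ is $r$-partite, which is a contradiction unless $k=0$. So clean first and apply stability and max-cut to $G_k$; with that ordering this part is fine.
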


\begin{lemma}\label{lem:double-critical-join}
Let $r>s\ge 2$ with $r-s\ge 2$, and let $H$ be a double-edge-critical graph with $\chi(H)=r+1$. Then there exist edge-critical graphs $F_1$ and $F$ such that
\[
   H\subseteq F_1+F,
   \qquad
   \chi(F_1)=r-s,
   \qquad
   \chi(F)=s+1.
\]
Moreover, $F_1$ and $F$ may be chosen as induced subgraphs of $H$ on two complementary unions of color classes in a suitable proper $(r+1)$-coloring of $H$.
\end{lemma}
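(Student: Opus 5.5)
The plan is to start from a proper $(r-1)$-colouring of $H-\{e_1,e_2\}$, lift it to a proper $(r+1)$-colouring of $H$ by giving one endpoint of each critical edge a new colour, and then split the $r+1$ colour classes into two groups. One group will carry $e_1$ and the other $e_2$. Since every edge between vertices in different groups joins different colour classes, $H\subseteq F_1+F$ holds automatically once $F_1$ and $F$ are the induced subgraphs on the two groups. The remaining work is to control their chromatic numbers and their criticality.

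Let $e_1=x_1y_1$ and $e_2=x_2y_2$ be the two edges, which are vertex-disjoint as observed in the introduction. Fix a proper colouring $c$ of $H'=H-\{e_1,e_2\}$ with classes $A_1,\dots,A_{r-1}$. The first step is to show $c(x_1)=c(y_1)$: otherwise $c$ is a proper colouring of $H-e_2$, and restoring $e_2$ by giving $y_2$ a fresh colour would colour $H$ with $r$ colours, a contradiction. Symmetrically, $c(x_2)=c(y_2)$. Write $x_1,y_1\in A_a$ and $x_2,y_2\in A_b$.

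The step I expect to be the main obstacle is showing $a\neq b$. Suppose $a=b$. If some pair $u_1u_2$ with $u_i\in\{x_i,y_i\}$ is a non-edge of $H$, then recolouring both $u_1$ and $u_2$ with one new common colour gives a proper $r$-colouring of $H$, which is impossible. Hence all four cross pairs are edges of $H$. These edges also lie in $H'$, yet $c$ colours both of their endpoints with $a$, so $c$ is not proper, a contradiction. Therefore $a\neq b$.

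Now recolour $y_1$ and $y_2$ with two new colours $\alpha$ and $\beta$. This gives a proper $(r+1)$-colouring of $H$ with classes $\{y_1\}$, $\{y_2\}$, $A_a\setminus\{y_1\}$, $A_b\setminus\{y_2\}$, and the $r-3$ untouched classes. Put $\{y_1\}$, $A_a\setminus\{y_1\}$ and $r-s-2$ of the untouched classes into group~1; this uses $r-s\ge 2$. Put $\{y_2\}$, $A_b\setminus\{y_2\}$ and the remaining $s-1$ untouched classes into group~2. Let $F_1$ and $F$ be the subgraphs of $H$ induced on the two groups. Then $F_1$ is $(r-s)$-coloured and $F$ is $(s+1)$-coloured.

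Merging $\alpha$ back into class $a$ shows $\chi(F_1-e_1)\le r-s-1$, and similarly $\chi(F-e_2)\le s$. Conversely, $\chi(F_1)\ge r-s$: otherwise colouring $F_1$ with $r-s-1$ colours and $F$ with $s+1$ disjoint colours would colour $H$ with $r$ colours. Likewise $\chi(F)\ge s+1$. Since deleting one edge lowers the chromatic number by at most one, $\chi(F_1)=r-s$ and $\chi(F)=s+1$ exactly, and both graphs are edge-critical, witnessed by $e_1$ and $e_2$ respectively. Finally, $F_1$ and $F$ are induced subgraphs of $H$ on complementary unions of colour classes of a proper $(r+1)$-colouring, which gives the ``moreover'' part of the statement.
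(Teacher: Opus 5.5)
Your proof is correct and follows essentially the same route as the paper. You lift a proper $(r-1)$-colouring of $H-\{e_1,e_2\}$ by splitting off one endpoint of each (necessarily monochromatic) critical edge, group the $r+1$ classes so that $e_1$ and $e_2$ land on opposite sides, and verify the chromatic numbers and criticality by merging classes and combining colourings; your argument for $a\neq b$ (giving a non-adjacent cross pair $u_1,u_2$ a common new colour) is just an explicit form of the paper's observation that the shared class would induce only two disjoint edges and hence be bipartite.
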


\begin{proof}
Let $e_1=x_1y_1$ and $e_2=x_2y_2$ be edges of $H$ such that
\[
   \chi\bigl(H-\{e_1,e_2\}\bigr)=r-1.
\]
By the preceding observation, $e_1$ and $e_2$ are vertex-disjoint. Fix a proper $(r-1)$-coloring of $H-\{e_1,e_2\}$, and write its color classes as
\[
   C_1,C_2,\ldots,C_{r-1}.
\]
In this coloring, the endpoints of each deleted edge must receive the same color. Indeed, if neither edge were monochromatic, the same coloring would be a proper coloring of $H$; if exactly one edge were monochromatic, then splitting one endpoint of that edge into a new color class would give a proper $r$-coloring of $H$. Both alternatives contradict $\chi(H)=r+1$.

The two deleted edges must be monochromatic in distinct color classes. If both pairs of endpoints lay in the same color class, then, in $H$, that class would induce only the two disjoint edges $e_1$ and $e_2$ and hence would be bipartite. Coloring this class with two colors and keeping the other $r-2$ color classes unchanged would give a proper $r$-coloring of $H$, again a contradiction. Thus, after relabeling, we may assume that $x_1,y_1\in C_1$ and $x_2,y_2\in C_2$.

Now split $x_1$ and $x_2$ into singleton color classes. Equivalently, replace $C_1$ and $C_2$ by
\[
   \{x_1\},\quad C_1\setminus\{x_1\},\quad \{x_2\},\quad C_2\setminus\{x_2\},
\]
and keep $C_3,\ldots,C_{r-1}$ unchanged. This gives a proper $(r+1)$-coloring of $H$ with two singleton color classes.

Since $r-s\ge 2$, choose one group of $r-s$ color classes consisting of $\{x_1\}$, $C_1\setminus\{x_1\}$, and any $r-s-2$ of the remaining old color classes different from $C_2$. Let $A$ be the union of these classes and put $B=V(H)\setminus A$. Then $B$ is the union of the remaining $s+1$ color classes and contains both endpoints of $e_2$.

Set
\[
   F_1=H[A],
   \qquad
   F=H[B].
\]
Since the join $F_1+F$ contains all possible edges between $A$ and $B$, we have $H\subseteq F_1+F$. The displayed coloring gives
\[
   \chi(F_1)\le r-s,
   \qquad
   \chi(F)\le s+1.
\]
Both inequalities are tight. If $\chi(F_1)\le r-s-1$, then coloring $F_1$ with at most $r-s-1$ colors and coloring $F$ by its displayed $s+1$ color classes would give a proper $r$-coloring of $H$. Similarly, if $\chi(F)\le s$, then coloring $F_1$ by its displayed $r-s$ color classes and coloring $F$ with at most $s$ colors would again give a proper $r$-coloring of $H$. Hence
\[
   \chi(F_1)=r-s,
   \qquad
   \chi(F)=s+1.
\]

Finally, $F_1$ is edge-critical. After deleting $e_1$, the singleton class $\{x_1\}$ can be merged with $C_1\setminus\{x_1\}$, so
\[
   \chi(F_1-e_1)\le r-s-1<\chi(F_1).
\]
Thus $e_1$ is a critical edge of $F_1$. The same argument with $e_2$ shows that $F$ is edge-critical. This completes the proof.
\end{proof}

\begin{lemma}\label{lem:critical-low-degree}
Let $F$ be a color-critical graph with $\chi(F)=s+1$. Then there exists $N_F$ such that every $F$-free graph $G$ on $N\ge N_F$ vertices contains a vertex of degree at most
\[
   t_s(N)-t_s(N-1).
\]
\end{lemma}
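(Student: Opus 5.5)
We argue by contradiction, following the averaging argument of Lemma~\ref{lem:low-degree} but with Simonovits' theorem replacing Tur\'an's theorem. Let $N_0=n_0(F)$ be the threshold from Theorem~\ref{thm:simonovits}, applied with $\chi(F)=s+1$, so that
\[
   \ex(N,F)=t_s(N)\qquad\text{for all } N\ge N_0 .
\]
Put $N_F=N_0$. Suppose $G$ is $F$-free on $N\ge N_F$ vertices and every vertex has degree at least $t_s(N)-t_s(N-1)+1$.

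By Lemma~\ref{lem:turan-difference},
\[
   t_s(N)-t_s(N-1)=N-1-\left\lfloor\frac{N-1}{s}\right\rfloor .
\]
Summing degrees therefore gives
\[
   e(G)\ge \frac{N}{2}\left(N-\left\lfloor\frac{N-1}{s}\right\rfloor\right).
\]
This inequality is purely numerical and does not depend on $G$ being $K_{s+1}$-free. The computation in the proof of Lemma~\ref{lem:low-degree} shows that the right-hand side exceeds $t_s(N)$ by $sq/2$ or $a(q+1)/2$, where $N=sq+a$ with $0\le a<s$; in either case the excess is positive. Hence $e(G)>t_s(N)=\ex(N,F)$, contradicting that $G$ is $F$-free. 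So some vertex of $G$ has degree at most $t_s(N)-t_s(N-1)$.

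The only step that needs care is the case $s=1$. Here $F$ would be color-critical with $\chi(F)=2$, which forces $F$ to contain a single edge plus possibly isolated vertices. For $N\ge v(F)$, an $F$-free graph on $N$ vertices is then empty, and the bound $t_1(N)-t_1(N-1)=0$ holds trivially. In fact the general argument already covers this case, because Theorem~\ref{thm:simonovits} is stated for every color-critical $F$. Beyond this, no real obstacle arises: the substitution of $\ex(N,F)=t_s(N)$ for Tur\'an's bound is exactly what makes the earlier proof go through.
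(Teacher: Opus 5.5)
Your proof is correct and is the paper's argument: take $N_F$ to be the threshold from Theorem~\ref{thm:simonovits}, and observe that the minimum-degree assumption together with the purely numerical computation from Lemma~\ref{lem:low-degree} forces $e(G)>t_s(N)=\ex(N,F)$, a contradiction. Your remark on $s=1$ is accurate (such an $F$ is a single edge plus isolated vertices) but not needed, as you note, since Theorem~\ref{thm:simonovits} already covers that case.
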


\begin{proof}
Choose $N_F$ so that Theorem~\ref{thm:simonovits} holds for $F$ for every $N\ge N_F$. If every vertex of $G$ had degree at least $t_s(N)-t_s(N-1)+1$, then the calculation in the proof of Lemma~\ref{lem:low-degree} would give $e(G)>t_s(N)$, contradicting Theorem~\ref{thm:simonovits}.
\end{proof}

\section{Proofs of Theorems~\ref{thm:1} and~\ref{thm:2}}

\begin{proof}[Proof of Theorem~\ref{thm:1}]
We prove the following slightly stronger fixed-set statement: if $M\subseteq V(G)$ has size $m$, $G[M]$ is $K_{s+1}$-free, and $m\ge \lceil sn/r\rceil$, then
\[
   e(G)\le \tau_{r,s}(n,m).
\]
The theorem follows by choosing such a set $M$ from the assumption $\alpha_s(G)\ge m$.

We argue by induction on $n$. The case $1\le n\le r$ is immediate. Assume $n\ge r+1$, and put
\[
   X=V(G)\setminus M.
\]
By Lemma~\ref{lem:low-degree}, the $K_{s+1}$-free graph $G[M]$ has a vertex $v\in M$ satisfying
\[
   d_{G[M]}(v)\le t_s(m)-t_s(m-1).
\]
Since $v$ has at most $n-m$ neighbors outside $M$, we have
\begin{equation}\label{eq:degree-bound-thm1}
   d_G(v)
   \le
   (n-m)+t_s(m)-t_s(m-1).
\end{equation}
Let
\[
   G'=G-v,
   \qquad
   M'=M\setminus\{v\}.
\]
Then $G'$ is $K_{r+1}$-free and $G'[M']$ is $K_{s+1}$-free.

If
\[
   m-1\ge \left\lceil\frac{s(n-1)}{r}\right\rceil,
\]
then the induction hypothesis gives
\[
   e(G')\le \tau_{r,s}(n-1,m-1).
\]
If instead
\[
   m-1< \left\lceil\frac{s(n-1)}{r}\right\rceil,
\]
then, since $m\ge \lceil sn/r\rceil$, we must have
\[
   m-1=\left\lfloor\frac{s(n-1)}{r}\right\rfloor.
\]
By Lemma~\ref{lem:threshold-turan},
\[
   \tau_{r,s}(n-1,m-1)=t_r(n-1).
\]
Thus Tur\'an's theorem again gives
\[
   e(G')\le \tau_{r,s}(n-1,m-1).
\]
Therefore, in all cases,
\begin{align*}
   e(G)
   &=e(G')+d_G(v) \\
   &\le \tau_{r,s}(n-1,m-1)+(n-m)+t_s(m)-t_s(m-1) \\
   &=(m-1)(n-m)+t_s(m-1)+t_{r-s}(n-m) \\
   &\qquad +(n-m)+t_s(m)-t_s(m-1) \\
   &=m(n-m)+t_s(m)+t_{r-s}(n-m) \\
   &=\tau_{r,s}(n,m).
\end{align*}
This proves the upper bound.

It remains to discuss equality. The construction in the statement is clearly $K_{r+1}$-free. If $M$ is the union of the first $s$ parts, then $G[M]\cong T_s(m)$, so $G[M]$ is $K_{s+1}$-free, and
\[
   e(G)=m(n-m)+t_s(m)+t_{r-s}(n-m).
\]
Thus equality is attained.

Conversely, suppose equality holds. Then equality must hold throughout the induction argument above. In particular, the chosen vertex $v\in M$ satisfies
\begin{equation}\label{eq:cross-complete-thm1}
   d_G(v,V(G)\setminus M)=n-m
\end{equation}
and
\begin{equation}\label{eq:internal-degree-thm1}
   d_{G[M]}(v)=t_s(m)-t_s(m-1)
   =m-1-\left\lfloor\frac{m-1}{s}\right\rfloor.
\end{equation}
If the induction hypothesis applies to $G'$, then equality also holds for $G'$. By induction, $G'$ is a complete $r$-partite graph whose parts can be divided into $s$ parts with total size $m-1$, as equal as possible, and $r-s$ parts with total size $n-m$, as equal as possible. Equation~\eqref{eq:cross-complete-thm1} says that $v$ is adjacent to every vertex outside $M$.

Inside $M'$, the graph is $T_s(m-1)$. Since $G[M]$ is $K_{s+1}$-free, the neighborhood of $v$ in $M'$ cannot meet all $s$ parts of this $T_s(m-1)$. Hence $v$ is nonadjacent to every vertex in at least one part of $T_s(m-1)$. By~\eqref{eq:internal-degree-thm1}, the number of non-neighbors of $v$ in $M'$ is exactly
\[
   \left\lfloor\frac{m-1}{s}\right\rfloor,
\]
which is the size of a smallest part of $T_s(m-1)$. Therefore $v$ is nonadjacent precisely to one smallest part of $T_s(m-1)$ and is adjacent to every other vertex of $M'$. Adding $v$ to that smallest part gives $T_s(m)$ on $M$. Together with~\eqref{eq:cross-complete-thm1}, this gives the complete $r$-partite graph described in the theorem.

It remains only to consider the boundary step, where
\[
   m-1=\left\lfloor\frac{s(n-1)}{r}\right\rfloor
\]
and the proof used Tur\'an's theorem for $G'$. Equality then implies
\[
   G'\cong T_r(n-1).
\]
Since $G$ is $K_{r+1}$-free, the neighborhood of $v$ in $G'$ is $K_r$-free. In the complete $r$-partite graph $T_r(n-1)$, this means that the neighborhood of $v$ must miss an entire part. By~\eqref{eq:cross-complete-thm1}, $v$ is adjacent to every vertex outside $M$, so the missed part lies inside $M'$. The number of non-neighbors of $v$ in $M'$ is, by~\eqref{eq:internal-degree-thm1},
\[
   \left\lfloor\frac{m-1}{s}\right\rfloor.
\]
Writing $n-1=qr+a$, one has
\[
   m-1=\left\lfloor\frac{s(n-1)}{r}\right\rfloor=sq+\left\lfloor\frac{sa}{r}\right\rfloor,
\]
and hence
\[
   \left\lfloor\frac{m-1}{s}\right\rfloor=q.
\]
Thus the missed part is a smallest part of $T_r(n-1)$, and $v$ is adjacent to every vertex outside that part. Adding $v$ to this smallest part gives $T_r(n)$, which admits the required division of its parts into $s$ parts with total size $m$ and $r-s$ parts with total size $n-m$. This completes the equality characterization.
\end{proof}

\begin{proof}[Proof of Theorem~\ref{thm:2}]
Put
\[
   t=r-s,
   \qquad
   J=F_1+F,
   \qquad
   J_0=F_1+K_1.
\]
Since $F_1$ and $F$ are edge-critical, both $J$ and $J_0$ are edge-critical, with
\[
   \chi(J)=r+1,
   \qquad
   \chi(J_0)=t+1.
\]
Moreover, $H\subseteq J$, so every $H$-free graph is also $J$-free. Choose a vertex $u\in V(F)$ incident with a critical edge of $F$. Then
\[
   \chi(F-u)\le s.
\]

Choose $q_0$ sufficiently large so that Simonovits' color-critical Tur\'an theorem applies to $J_0$ for every order at least $q_0$. Thus
\[
   \ex(q,J_0)=t_t(q)
   \qquad\text{for all }q\ge q_0.
\]
Increase $q_0$, if necessary, so that whenever $q\ge q_0$ and
\[
   m\ge \left\lceil\frac{s(m+q)}{r}\right\rceil,
\]
we have $m$ large enough for Lemma~\ref{lem:critical-low-degree} to apply to $F$. Finally choose $n_0$ large enough so that Simonovits' theorem applies to $J$ on all orders arising in the boundary step below, and so that the bounded-complement argument at the end applies whenever $n\ge n_0$.

We first prove the exact bound in the range $q=n-m\ge q_0$. The proof is the same induction as in Theorem~\ref{thm:1}, with Simonovits' theorem replacing Tur\'an's theorem. Since $G[M]$ is $F$-free and $m$ is sufficiently large, Lemma~\ref{lem:critical-low-degree} gives a vertex $v\in M$ such that
\[
   d_{G[M]}(v)\le t_s(m)-t_s(m-1).
\]
Hence
\[
   d_G(v)\le q+t_s(m)-t_s(m-1).
\]
Let
\[
   G'=G-v,
   \qquad
   M'=M\setminus\{v\}.
\]
If
\[
   m-1\ge \left\lceil\frac{s(n-1)}{r}\right\rceil,
\]
then the induction hypothesis applies to $G'$ and $M'$, since the value of $q=n-m$ is unchanged. If this inequality fails, then
\[
   m-1=\left\lfloor\frac{s(n-1)}{r}\right\rfloor.
\]
Since $G'$ is $J$-free and has sufficiently large order, Simonovits' theorem gives
\[
   e(G')\le t_r(n-1).
\]
By Lemma~\ref{lem:threshold-turan},
\[
   t_r(n-1)=\tau_{r,s}(n-1,m-1).
\]
Thus in all cases
\[
   e(G')\le \tau_{r,s}(n-1,m-1).
\]
Consequently,
\begin{align*}
   e(G)
   &=e(G')+d_G(v) \\
   &\le \tau_{r,s}(n-1,m-1)+q+t_s(m)-t_s(m-1) \\
   &=m q+t_s(m)+t_t(q) \\
   &=\tau_{r,s}(n,m).
\end{align*}
This proves the desired exact bound when $q\ge q_0$.

It remains to prove the finite-complement estimate for $q<q_0$. Since $q$ is bounded and $n$ is sufficiently large, $m$ is sufficiently large for Simonovits' theorem to apply to $F$, and hence
\[
   e(G[M])\le t_s(m).
\]
Suppose, for a contradiction, that
\[
   e(G)>mq+t_s(m)+\ex(q,J_0).
\]
Let $X=V(G)\setminus M$, and define
\[
   A=t_s(m)-e(G[M]),
   \qquad
   B=mq-e(M,X),
   \qquad
   C=e(G[X])-\ex(q,J_0).
\]
Then $A\ge0$, $B\ge0$, and the assumed strict inequality gives
\[
   C>A+B.
\]
In particular, $C>0$, so $G[X]$ contains a copy $Y$ of $J_0=F_1+K_1$. Since
\[
   B<C\le \binom q2\le \binom{q_0}{2},
\]
the common neighborhood of all vertices of $Y$ inside $M$ has size at least
\[
   m-B\ge m-\binom{q_0}{2}.
\]
Denote this common neighborhood by $U$.

Also,
\[
   e(G[M])=t_s(m)-A\ge t_s(m)-\binom{q_0}{2}.
\]
Deleting the at most $B$ vertices of $M\setminus U$ removes at most $Bm$ edges, so
\[
   e(G[U])
   \ge t_s(m)-O_{F_1,F,r,s}(m).
\]
Since $|U|=m-O_{F_1,F,r,s}(1)$ and $\chi(F-u)\le s$, the set $U$ contains a copy of $F-u$ for all sufficiently large $n$. Indeed, if $F-u$ is edgeless, this is immediate from $|U|\to\infty$. Otherwise, the Erd\H{o}s--Stone--Simonovits theorem gives
\[
   \ex(|U|,F-u)
   \le
   \left(1-\frac{1}{s-1}+o(1)\right)\binom{|U|}{2},
\]
where the right-hand side is interpreted as $o(|U|^2)$ when $s=2$. This is smaller than $e(G[U])\ge t_s(m)-O_{F_1,F,r,s}(m)$ for all sufficiently large $n$.

In the copy $Y\cong F_1+K_1$, use the $K_1$-vertex to play the vertex $u$. Since every vertex of $Y$ is adjacent to every vertex of $U$, the copy of $F-u$ in $U$ together with $Y$ contains a copy of $F_1+F$, and hence contains a copy of $H$. This contradicts the assumption that $G$ is $H$-free. Therefore
\[
   e(G)\le mq+t_s(m)+\ex(q,J_0)
\]
for $q<q_0$ as well.

The equality statement in the range $q\ge q_0$ follows from the equality cases in the induction. Equality forces equality in Lemma~\ref{lem:critical-low-degree} and in Simonovits' theorem for $F$ and $J$. Hence $G[M]\cong T_s(m)$, $G[X]\cong T_t(q)$, and all edges between $M$ and $X$ are present. The same part-by-part argument used in the proof of Theorem~\ref{thm:1} then shows that $G$ is the complete $r$-partite graph described in the statement. Conversely, that graph is $r$-partite, and hence $H$-free because $\chi(H)=r+1$, and it has exactly
\[
   m(n-m)+t_s(m)+t_{r-s}(n-m)
\]
edges.
\end{proof}

\section{Proof of Theorem~\ref{thm:3}}

The main tools are the graph removal lemma~\cite{ruzsa1976triple} and Szemer\'edi's regularity lemma~\cite{szemeredi1975}. We use the following standard terminology, following Diestel~\cite[Section~7.4]{diestel2024}.

Let $G=(V,E)$ be a graph. For two disjoint nonempty sets $X,Y\subseteq V$, define the density between $X$ and $Y$ by
\[
   d(X,Y)=\frac{e(X,Y)}{|X||Y|}.
\]
For $\epsilon>0$, a pair of disjoint sets $A,B\subseteq V$ is called \emph{$\epsilon$-regular} if every $X\subseteq A$ and $Y\subseteq B$ satisfying
\[
   |X|\ge \epsilon |A|,
   \qquad
   |Y|\ge \epsilon |B|
\]
also satisfies
\[
   |d(X,Y)-d(A,B)|\le \epsilon.
\]

A partition $\{V_0,V_1,\ldots,V_k\}$ of $V$ is called an \emph{$\epsilon$-regular partition} of $G$, with exceptional set $V_0$, if
\begin{enumerate}
   \item $|V_0|\le \epsilon |V|$;
   \item $|V_1|=\cdots=|V_k|$;
   \item all but at most $\epsilon k^2$ of the pairs $(V_i,V_j)$ with $1\le i<j\le k$ are $\epsilon$-regular.
\end{enumerate}

Given an $\epsilon$-regular partition $\{V_0,V_1,\ldots,V_k\}$ with
\[
   |V_1|=\cdots=|V_k|=\ell
\]
and a density parameter $d\in(0,1]$, the \emph{regularity graph} $R$ has vertex set
\[
   V(R)=\{V_1,\ldots,V_k\},
\]
where $V_iV_j\in E(R)$ if and only if $(V_i,V_j)$ is $\epsilon$-regular in $G$ and
\[
   d(V_i,V_j)\ge d.
\]

\begin{lemma}[Regularity lemma with an initial partition; see~\cite{szemeredi1975}]\label{lem:reg}
For every $\epsilon>0$ and all integers $a,t\ge 1$, there exists an integer $b$ such that the following holds. For every graph $G$ and every partition $\mathcal P$ of $V(G)$ into at most $t$ parts, the graph $G$ admits an $\epsilon$-regular partition
\[
   \{V_0,V_1,\ldots,V_k\}
\]
with
\[
   a\le k\le b,
\]
such that each $V_i$, $i\ge 1$, is contained in some member of $\mathcal P$.
\end{lemma}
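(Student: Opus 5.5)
The plan is to follow the standard energy-increment proof of Szemer\'edi's regularity lemma. The only change is that we start from an equipartition refining $\mathcal P$, and every later step is a refinement, so each non-exceptional class stays inside a member of $\mathcal P$. Let $n=|V(G)|$. If $n\le b$ (with $b$ chosen below), we take all parts to be singletons and $V_0=\emptyset$. Every pair of singletons is trivially $\epsilon$-regular, and each singleton lies in a member of $\mathcal P$. The degenerate case $n<a$ is ignored, as is customary. So assume $n$ is large.

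\emph{Index.} For a partition $\{V_0,V_1,\ldots,V_k\}$, define
\[
   q(V_1,\ldots,V_k)=\sum_{1\le i<j\le k}\frac{|V_i||V_j|}{n^2}\,d(V_i,V_j)^2 .
\]
This quantity always lies in $[0,1/2]$, and by Cauchy--Schwarz (convexity of $x\mapsto x^2$) it does not decrease under refinement.

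\emph{Initial partition.} Put $k_0=\max\{a,\lceil 2t/\epsilon\rceil\}$ and $\ell_0=\lfloor n/k_0\rfloor$. Cut each member of $\mathcal P$ into blocks of size $\ell_0$. The leftover vertices, fewer than $\ell_0$ per member of $\mathcal P$, go into $V_0$, so
\[
   |V_0|\le t\ell_0\le \tfrac{\epsilon}{2}n .
\]
Each block lies in a member of $\mathcal P$, and the number of blocks is between roughly $k_0/2$ and $k_0$; enlarging $k_0$ slightly if necessary gives at least $a$ blocks.

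\emph{Iteration.} Suppose the current partition $V_1,\ldots,V_k$ with exceptional set $V_0$ is not $\epsilon$-regular, so more than $\epsilon k^2$ pairs are irregular. For each irregular pair $(V_i,V_j)$, choose witness sets $X_{ij}\subseteq V_i$ and $Y_{ji}\subseteq V_j$ that violate regularity. Refine each $V_i$ by the Venn diagram of its at most $k-1$ witness sets, giving at most $2^{k-1}$ atoms per class. The usual defect form of Cauchy--Schwarz shows that one irregular pair contributes an index gain of at least $\epsilon^4|V_i||V_j|/n^2$. Summing over more than $\epsilon k^2$ such pairs, with $|V_i|\approx(1-\epsilon)n/k$, gives a total gain of at least about $\epsilon^5/2$.

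Next we re-equalize. Cut every atom into pieces of a common size $\ell'=\lfloor \ell/4^k\rfloor$ and send the remainders to $V_0$. This adds at most $k2^{k}\ell'\le n/2^k$ vertices to $V_0$. The new classes still refine the old ones, hence still refine $\mathcal P$. Moving a small number of vertices to $V_0$ lowers the index by only $O(n/2^k\cdot 1/n)$. This is less than $\epsilon^5/4$ once $k_0$ is chosen large enough in terms of $\epsilon$, so the net gain per step is at least $\epsilon^5/4$.

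\emph{Termination and bound.} The index is at most $1/2$, so there are at most $2\epsilon^{-5}$ iterations. Let $f(k)=k4^k$ and set $b=f^{(2\epsilon^{-5})}(k_0)$, an iterated exponential depending only on $\epsilon,a,t$. The number of classes starts at least $a$ and never decreases, so at termination we have an $\epsilon$-regular partition with $a\le k\le b$. It remains to check that $|V_0|\le\epsilon n$ throughout. The initial contribution is at most $\epsilon n/2$, and the increments $\sum_i n/2^{k_i}$ form a geometric-type sum bounded by $\epsilon n/2$ because $k_0\ge 2t/\epsilon$ is large.

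The main obstacle I expect is this bookkeeping: ensuring that the exceptional set stays below $\epsilon n$ and that the index loss from re-equalization never cancels the $\epsilon^5/2$ gain. To handle it, I would first fix $k_0$ large as a function of $\epsilon$ (for instance $2^{k_0}\ge 16\epsilon^{-6}$) and only then define $b$.
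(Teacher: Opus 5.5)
Your proposal is correct, and it is essentially the standard argument: the paper gives no proof of this lemma and only cites Szemer\'edi, and your energy-increment proof started from an equipartition refining $\mathcal P$ (as in Diestel, Section~7.4) is the usual way this initial-partition version is established. The remaining bookkeeping is routine. Take the singleton cutoff somewhat larger than $b$, and $f(k)=2k4^k$ rather than $k4^k$ to absorb the floor in $\ell'$; this guarantees $\ell\ge 4^k$, hence $\ell'\ge 1$, at every iteration. You are also right that the statement as written implicitly needs $|V(G)|\ge a$.
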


\begin{lemma}[Embedding lemma; see~\cite{szemeredi1975}]\label{lem:red}
For every $d\in(0,1]$ and every integer $\Delta\ge 1$, there exists $\epsilon_0>0$ with the following property. Let $G$ be a graph, let $H$ be a graph with $\Delta(H)\le \Delta$, and let $q\in\mathbb N$. If $R$ is a regularity graph of $G$ with parameters $\epsilon,\ell,d$, where
\[
   \epsilon\le \epsilon_0
   \qquad\text{and}\qquad
   \ell\ge \frac{2q}{d^\Delta},
\]
then
\[
   H\subseteq R^q
   \qquad\Longrightarrow\qquad
   H\subseteq G.
\]
\end{lemma}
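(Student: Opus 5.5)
The approach is the standard greedy embedding driven by regularity, essentially as in Diestel's proof of the key lemma. Suppose $H\subseteq R^q$. Composing with the projection $R^q\to R$ gives a map $\varphi\colon V(H)\to V(R)=\{V_1,\dots,V_k\}$ with three properties: adjacent vertices of $H$ go to adjacent clusters, so the corresponding pair is $\epsilon$-regular with density at least $d$; each cluster receives at most $q$ vertices of $H$; and adjacent vertices of $H$ never share a cluster. Write $h=v(H)$ and order $V(H)=\{u_1,\dots,u_h\}$. We embed $u_1,u_2,\dots$ one at a time into distinct vertices $x_i\in\varphi(u_i)$.

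\emph{Invariant.} For each vertex $u_j$ not yet embedded we keep a candidate set $C(u_j)\subseteq \varphi(u_j)$, initially equal to $\varphi(u_j)$. Every vertex of $C(u_j)$ is adjacent to all images already chosen for the embedded neighbours of $u_j$. If $k_j$ of these neighbours have been embedded, then $|C(u_j)|\ge (d-\epsilon)^{k_j}\ell$.

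\emph{Choice of $\epsilon_0$.} Fix $\epsilon_0>0$ so small that for all $\epsilon\le\epsilon_0$:
\[
   (d-\epsilon)^{\Delta}-\Delta\epsilon\ge \tfrac12 d^{\Delta}
   \qquad\text{and}\qquad
   \epsilon\le (d-\epsilon)^{\Delta}.
\]
This is possible because both conditions hold in the limit $\epsilon\to 0$ (using $\Delta\ge1$ and $d\le 1$ for the first). In particular $\epsilon<d$.

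\emph{Embedding step.} When embedding $u_i$, its candidate set has size at least $(d-\epsilon)^{\Delta}\ell$. Call $x\in C(u_i)$ \emph{bad} for an unembedded neighbour $u_j$ of $u_i$ if $x$ has fewer than $(d-\epsilon)|C(u_j)|$ neighbours in $C(u_j)$. Before this step $u_j$ has at most $\Delta-1$ embedded neighbours, so
\[
   |C(u_j)|\ge (d-\epsilon)^{\Delta-1}\ell\ge \epsilon\ell .
\]
The pair $(\varphi(u_i),\varphi(u_j))$ is $\epsilon$-regular with density at least $d$. Hence fewer than $\epsilon\ell$ vertices of $\varphi(u_i)$ are bad for $u_j$: otherwise the set of bad vertices $X$ and the set $Y=C(u_j)$ would both be large enough to apply regularity, yet $d(X,Y)<d-\epsilon$, a contradiction. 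There are at most $\Delta$ such neighbours $u_j$, so at most $\Delta\epsilon\ell$ vertices of $C(u_i)$ are bad. At most $q-1$ vertices of $\varphi(u_i)$ are already used by earlier vertices of $H$ mapped to the same cluster. The number of admissible choices for $x_i$ is therefore at least
\[
   (d-\epsilon)^{\Delta}\ell-\Delta\epsilon\ell-(q-1)\ \ge\ \tfrac12 d^{\Delta}\ell-q+1\ \ge\ 1,
\]
using $\ell\ge 2q/d^{\Delta}$.

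We pick such an $x_i$ and replace each $C(u_j)$, for unembedded neighbours $u_j$, by $C(u_j)\cap N(x_i)$. Since $x_i$ is good for every such $u_j$, the new set has size at least $(d-\epsilon)|C(u_j)|$, so the invariant is restored. After $h$ steps all vertices are embedded, the $x_i$ are distinct, and every edge of $H$ maps to an edge of $G$ by the invariant, so $H\subseteq G$.

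\emph{Main obstacle.} The delicate point is the parameter bookkeeping. Candidate sets must stay above $\epsilon\ell$ through up to $\Delta-1$ shrinkings, so that regularity still applies when the last neighbour is embedded. At the same time, the losses from bad vertices and from reused vertices within a cluster must not exhaust $C(u_i)$. The choice of $\epsilon_0$ above, together with the hypothesis $\ell\ge 2q/d^{\Delta}$, is designed to handle both requirements simultaneously.
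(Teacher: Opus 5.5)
Your proof is correct and is the standard greedy-embedding proof of Diestel's key lemma: shrinking candidate sets, fewer than $\epsilon\ell$ bad vertices per $\epsilon$-regular pair, and the count $(d-\epsilon)^{\Delta}\ell-\Delta\epsilon\ell-(q-1)\ge 1$. The paper gives no proof of this lemma and only cites it, and your argument matches the cited source; one minor remark is that your second condition $\epsilon\le(d-\epsilon)^{\Delta}$ already follows from the first, since $(d-\epsilon)^{\Delta}\ge \tfrac12 d^{\Delta}+\Delta\epsilon>\epsilon$.
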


\begin{lemma}[Graph removal lemma; see~\cite{ruzsa1976triple}]\label{lem:removal}
For every fixed graph $F$ and every $\mu>0$, there exists $\eta>0$ such that every graph $G$ on $N$ vertices containing at most $\eta N^{v(F)}$ copies of $F$ can be made $F$-free by deleting at most $\mu N^2$ edges.
\end{lemma}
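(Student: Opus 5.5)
The plan is the standard regularity-lemma proof: apply Lemma~\ref{lem:reg}, delete a small number of "bad" edges, and show that any copy of $F$ that survives forces $\Omega(N^{v(F)})$ copies of $F$ in $G$. Choosing $\eta$ below that count then gives a contradiction.

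Let $f=v(F)$ and $\Delta=\Delta(F)$, and fix $\mu>0$. Set the density threshold $d=\mu/2$. Choose $\epsilon>0$ small, with $\epsilon\le\mu/8$ and $\epsilon$ small compared with $d^{\Delta}/f$ (the exact requirement comes from the counting step below). Apply Lemma~\ref{lem:reg} with this $\epsilon$, with $a\ge 8/\mu$ and the trivial partition $t=1$. This gives an integer $b$ and an $\epsilon$-regular partition $\{V_0,V_1,\ldots,V_k\}$ with $a\le k\le b$. Each cluster then has size $\ell\ge(1-\epsilon)N/b$. We may assume $N$ is large, since for $N$ below a constant $N_0(\mu,F)$ any $\eta<N_0^{-f}$ forces $G$ to be $F$-free already. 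Form $G'$ from $G$ by deleting four kinds of edges:
\begin{itemize}
\item edges inside some $V_i$, at most $k\ell^2\le N^2/k\le \mu N^2/8$ of them;
\item edges meeting $V_0$, at most $\epsilon N^2$;
\item edges in irregular pairs, at most $\epsilon k^2\ell^2\le\epsilon N^2$;
\item edges in regular pairs of density below $d$, at most $d\binom{k}{2}\ell^2\le dN^2/2$.
\end{itemize}
The total is at most $\mu N^2$. Every remaining edge lies in a pair $(V_i,V_j)$ that is an edge of the regularity graph $R$ with parameters $\epsilon,\ell,d$.

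Suppose $G'$ still contains a copy of $F$. Each vertex of this copy lies in some cluster $V_i$. Two adjacent vertices of $F$ lie in distinct clusters forming an edge of $R$, because all edges inside clusters have been deleted. So the map sending each vertex of $F$ to its cluster is a homomorphism $F\to R$, and hence $F\subseteq R^{f}$. Lemma~\ref{lem:red} would give only one copy of $F$; we need a counting version. This is the step I expect to be the main obstacle, since the embedding lemma as quoted is an existence statement.

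To get the count, I would prove the counting version by the usual greedy argument. Embed the vertices $x_1,\dots,x_f$ of $F$ one at a time into their assigned clusters. For each not-yet-embedded vertex $x_j$, maintain a candidate set $C_j\subseteq V_{\phi(x_j)}$. Initially $C_j$ is the whole cluster; after each embedded neighbour, $C_j$ shrinks by a factor of at least $d-\epsilon$, so $|C_j|\ge(d-\epsilon)^{\Delta}\ell\ge\epsilon\ell$ throughout.

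By $\epsilon$-regularity, all but at most $\epsilon\ell$ vertices of the current candidate set for $x_i$ are "typical": they have at least $(d-\epsilon)|C_j|$ neighbours in $C_j$ for each later neighbour $x_j$ of $x_i$. The exceptions are at most $\Delta\epsilon\ell$ vertices in total. We must also avoid the at most $f$ vertices already used. Hence $x_i$ has at least
\[
(d-\epsilon)^{\Delta}\ell-\Delta\epsilon\ell-f\ \ge\ \tfrac12 (d-\epsilon)^{\Delta}\ell
\]
choices, for $\epsilon$ small and $N$ large. Multiplying over all steps gives at least $\bigl(\tfrac12(d-\epsilon)^{\Delta}\bigr)^{f}\ell^{f}$ labelled copies of $F$ in $G'\subseteq G$. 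Dividing by $|\mathrm{Aut}(F)|\le f!$ and using $\ell\ge(1-\epsilon)N/b$, this is at least $c\,N^{f}$, where $c=c(\mu,F)>0$ depends only on $d,\epsilon,b$ and $F$, because $b$ depends only on $\epsilon$ and $a$.

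Finally, choose $\eta<c$ (and $\eta<N_0^{-f}$ for the small-$N$ case). If $G$ has at most $\eta N^{f}$ copies of $F$, then $G'$ cannot contain a copy of $F$, since otherwise $G$ would contain at least $cN^{f}>\eta N^{f}$ copies. So $G'$ is $F$-free, and $G'$ was obtained from $G$ by deleting at most $\mu N^2$ edges, as required.
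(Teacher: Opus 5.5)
The paper gives no proof of this lemma. It is quoted from the literature and used only as a black box in the proof of Theorem~\ref{thm:3}.

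Your argument is the standard regularity-based proof, and it is correct. You clean $G$ using Lemma~\ref{lem:reg}, read off a homomorphism $F\to R$ from any surviving copy, and then replace the existence-only Lemma~\ref{lem:red} by a counting version proved through greedy embedding. The details check out:
\begin{itemize}
\item The deletion budget is $\mu/8+\epsilon+\epsilon+\mu/4\le 5\mu/8$, which is within $\mu$.
\item On every pair that is an edge of $R$, $G'$ agrees with $G$. So the regularity and density properties used in the counting step are available.
\item The invariant $|C_j|\ge (d-\epsilon)^{\Delta}\ell\ge \epsilon\ell$ is exactly what lets you invoke $\epsilon$-regularity to bound the atypical vertices by $\epsilon\ell$ per later neighbour.
\item Avoiding the at most $f$ used vertices correctly handles non-adjacent vertices of $F$ that share a cluster.
\item The small-$N$ case is handled properly by taking $\eta<N_0^{-f}$.
\end{itemize}

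One small imprecision: a vertex of the surviving copy that is isolated in $F$ need not lie in any cluster, since it may sit in $V_0$. Simply send it to an arbitrary cluster. The homomorphism condition constrains only edges, and the counting step has no constraint on such a vertex. This also covers the degenerate case of edgeless $F$.

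Compared with the paper's citation, your route gives a self-contained proof using only the regularity lemma already invoked in Section~4. The cost is upgrading the embedding lemma to a counting lemma, which you carry out correctly.
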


\begin{proof}[Proof of Theorem~\ref{thm:3}]
It is enough to prove the theorem for $0<\varepsilon<1$. Put
\[
   r=\chi(H)-1,
   \qquad
   p=\chi(F)-1.
\]
Then $r>p\ge 1$. Define
\[
   B(n,m)
   =
   m(n-m)
   +\left(1-\frac{1}{p}\right)\binom{m}{2}
   +\left(1-\frac{1}{r-p}\right)\binom{n-m}{2},
\]
and, for $x\in[0,1]$, define
\[
   \psi(x)=1-\frac{x^2}{p}-\frac{(1-x)^2}{r-p}.
\]
Here $\psi(x)$ is the coefficient of the main term of $B(n,m)$. For $x\ge p/r$, the function $\psi$ is nonincreasing, since
\[
   \psi'(x)=\frac{2(p-rx)}{p(r-p)}\le 0.
\]
Choose $\xi>0$ such that
\[
   0<\xi<\frac{1}{4}\left(c-\frac{p}{r}\right)
\]
and, for every $x\in[c,1]$,
\begin{equation}\label{eq:psi-continuity}
   \psi(x-\xi)\le \psi(x)+\frac{\varepsilon}{8}.
\end{equation}

Let
\[
   L=\max\{v(H),v(F)\},
   \qquad
   \Delta=\max\{\Delta(H),\Delta(F)\}.
\]
Choose a density parameter $d>0$ small enough that $d<\varepsilon/16$, and let $\epsilon_0$ be given by Lemma~\ref{lem:red} with parameters $d$ and $\Delta$. Choose $\epsilon>0$ such that
\begin{equation}\label{eq:epsilon-choice}
   \epsilon\le \epsilon_0,
   \qquad
   \epsilon<\frac{\xi}{4},
   \qquad
   4\epsilon+d<\frac{\varepsilon}{4}.
\end{equation}
Choose an integer $a$ sufficiently large that
\begin{equation}\label{eq:a-choice}
   \frac{1}{a}<\min\left\{\frac{\xi}{2},\frac{\varepsilon}{4}\right\}
\end{equation}
and such that, for every $k\ge a$ and every integer $q$ with $0\le q\le k$,
\begin{equation}\label{eq:uniform-tau}
   \frac{2\tau_{r,p}(k,q)}{k^2}
   \le
   \psi\left(\frac{q}{k}\right)+\frac{\varepsilon}{8}.
\end{equation}
This is possible by Lemma~\ref{lem:limit}, uniformly in $q$.

Apply Lemma~\ref{lem:reg} with parameters $\epsilon$, $a$, and $t=2$, and let $b$ be the resulting upper bound on the number of parts. Apply Lemma~\ref{lem:removal} to $F$ with
\[
   \mu=\frac{\varepsilon}{8},
\]
and let $\eta>0$ be the corresponding constant. Finally, choose $n_0$ sufficiently large that, for all $n\ge n_0$ and all $m\le n$,
\begin{equation}\label{eq:B-asymptotic}
   \frac{2B(n,m)}{n^2}
   \ge
   \psi\left(\frac{m}{n}\right)-\frac{\varepsilon}{8},
\end{equation}
and also
\begin{equation}\label{eq:cluster-size}
   \frac{(1-\epsilon)n}{b}\ge \frac{2L}{d^\Delta}.
\end{equation}

Suppose, for a contradiction, that $G$ satisfies the hypotheses but
\begin{equation}\label{eq:contradiction-assumption}
   e(G)>B(n,m)+\varepsilon n^2.
\end{equation}
By the choice of $\eta$ and the graph removal lemma, we may delete at most
\[
   \frac{\varepsilon}{8}m^2\le \frac{\varepsilon}{8}n^2
\]
edges from $G[M]$ and obtain a graph $G'$ such that $G'[M]$ is $F$-free. From~\eqref{eq:contradiction-assumption},
\begin{equation}\label{eq:G-prime-edge-lower}
   e(G')>B(n,m)+\frac{7\varepsilon}{8}n^2.
\end{equation}

Apply the regularity lemma with initial partition
\[
   \mathcal P=\{M,V(G)\setminus M\}
\]
to $G'$. We obtain an $\epsilon$-regular partition
\[
   \{V_0,V_1,\ldots,V_k\}
\]
with
\[
   a\le k\le b,
   \qquad
   |V_1|=\cdots=|V_k|=\ell,
\]
such that every $V_i$, $i\ge 1$, is contained either in $M$ or in $V(G)\setminus M$. By~\eqref{eq:cluster-size},
\[
   \ell=\frac{n-|V_0|}{k}
   \ge
   \frac{(1-\epsilon)n}{b}
   \ge
   \frac{2L}{d^\Delta}.
\]
Let $R$ be the regularity graph of $G'$ with parameters $\epsilon,\ell,d$.

Let
\[
   I=\{i\in[k]: V_i\subseteq M\}.
\]
Since all nonexceptional clusters refine the partition $\{M,V(G)\setminus M\}$,
\[
   |I|\ell\ge m-|V_0|\ge m-\epsilon n.
\]
As $\ell\le n/k$, this gives
\begin{equation}\label{eq:I-lower}
   |I|\ge \left(\frac{m}{n}-\epsilon\right)k.
\end{equation}

We claim that $R[I]$ is $K_{p+1}$-free. Indeed, if $R[I]$ contained a copy of $K_{p+1}$, then
\[
   F\subseteq K_{p+1}^L\subseteq R[I]^L.
\]
By Lemma~\ref{lem:red}, applied inside $G'[M]$, this would imply $F\subseteq G'[M]$, contradicting the fact that $G'[M]$ is $F$-free. Hence
\begin{equation}\label{eq:alpha-R}
   \alpha_p(R)\ge |I|.
\end{equation}
Similarly, $R$ is $K_{r+1}$-free. Otherwise, since
\[
   H\subseteq K_{r+1}^L,
\]
Lemma~\ref{lem:red} would imply $H\subseteq G'\subseteq G$, contradicting the assumption that $G$ is $H$-free.

We now estimate $e(R)$ from below. Every edge of $G'$ is either incident with $V_0$, lies inside one cluster, lies between an irregular pair, lies between a regular pair of density less than $d$, or corresponds to an edge of $R$. Therefore
\[
   e(G')
   \le
   e(R)\ell^2
   +|V_0|n
   +\epsilon k^2\ell^2
   +d\binom{k}{2}\ell^2
   +k\binom{\ell}{2}.
\]
Using $|V_0|\le \epsilon n$, $k\ell\le n$, and $k\ge a$, we obtain
\begin{equation}\label{eq:R-edge-lower}
   \frac{2e(R)}{k^2}
   \ge
   \frac{2e(G')}{n^2}-4\epsilon-d-\frac{1}{a}.
\end{equation}
Combining~\eqref{eq:B-asymptotic}, \eqref{eq:G-prime-edge-lower}, \eqref{eq:epsilon-choice}, \eqref{eq:a-choice}, and~\eqref{eq:R-edge-lower}, with $x=m/n$, yields
\begin{equation}\label{eq:R-too-many}
   \frac{2e(R)}{k^2}
   >
   \psi(x)+\varepsilon.
\end{equation}

Now set
\[
   q=\left\lceil (x-\xi)k\right\rceil.
\]
Since $x\ge c$ and $\xi<(c-p/r)/4$, we have
\[
   q\ge \left\lceil\frac{pk}{r}\right\rceil.
\]
Moreover, by~\eqref{eq:a-choice},
\[
   q\le (x-\xi)k+1\le \left(x-\frac{\xi}{2}\right)k,
\]
and by~\eqref{eq:epsilon-choice} and~\eqref{eq:I-lower}, this gives
\[
   q\le |I|.
\]
Thus $\alpha_p(R)\ge q$. Since $R$ is $K_{r+1}$-free, Theorem~\ref{thm:1} gives
\[
   e(R)
   \le
   q(k-q)+t_p(q)+t_{r-p}(k-q)
   =\tau_{r,p}(k,q).
\]
Using~\eqref{eq:uniform-tau}, \eqref{eq:psi-continuity}, and the fact that $\psi$ is nonincreasing on $[p/r,1]$, we get
\[
   \frac{2e(R)}{k^2}
   \le
   \frac{2\tau_{r,p}(k,q)}{k^2}
   \le
   \psi\left(\frac{q}{k}\right)+\frac{\varepsilon}{8}
   \le
   \psi(x-\xi)+\frac{\varepsilon}{8}
   \le
   \psi(x)+\frac{\varepsilon}{4},
\]
contradicting~\eqref{eq:R-too-many}. This contradiction proves Theorem~\ref{thm:3}.
\end{proof}

\section{Concluding Remarks}

Theorem~\ref{thm:1} gives an exact answer when the forbidden graph is a clique and the large prescribed set is $K_{s+1}$-free. Theorem~\ref{thm:2} extends this exact result to forbidden graphs that embed into the join of two edge-critical graphs, with a finite extremal correction for bounded complementary sets. Theorem~\ref{thm:3} gives an asymptotic analogue for general forbidden graphs $H$ and prescribed sets that span few copies of a fixed graph $F$. A natural next problem is to seek exact results, or sharp stability statements, in this more general setting.

\begin{problem}
Let $H$ and $F$ be fixed graphs with $\chi(H)>\chi(F)\ge 2$. Determine, for special families of $H$ and $F$, the maximum number of edges in an $H$-free graph on $n$ vertices containing a set $M$ of size $m$ such that $G[M]$ is $F$-free.
\end{problem}

Another natural direction is to characterize stability. Namely, if an $H$-free graph satisfying the hypotheses of Theorem~\ref{thm:3} has close to the maximum possible number of edges, must it be close to the complete multipartite construction described in the remark following the theorem?

\medskip
\noindent\textbf{Acknowledgements.} ChatGPT 5.5 are used to improve the presentation of the manuscript; 
the main concepts, ideas, and proofs were developed independently by the authors
and we take full responsibility for the contents of the paper.


\begin{thebibliography}{99}
\small

\bibitem{BBRS}
P. Balister, B. Bollob\'as, O. Riordan, and R. Schelp,
Graphs with large maximum degree containing no odd cycles of a given length,
\emph{Journal of Combinatorial Theory, Series B} \textbf{87} (2003), no.~2, 429--439.

\bibitem{BCL}
N. Behague, D. Chakraborti, and X. Liu,
Sabotaging Mantel's Theorem,
\emph{The Electronic Journal of Combinatorics} \textbf{33} (2026), no.~1.

\bibitem{diestel2024}
R.~Diestel,
\emph{Graph Theory}, 6th~ed.,
Graduate Texts in Mathematics, vol.~173,
Springer, Berlin, 2024.

\bibitem{HHLLYZ}
J. Hou, C. Hu, H. Li, X. Liu, C. Yang, and Y. Zhang,
On the boundedness of degenerate hypergraphs,
\emph{Acta Mathematica Sinica, English Series} \textbf{42} (2026), no.~2, 464--480.

\bibitem{HY}
Q. Huo and L. Yuan,
Graphs with large maximum degree containing no edge-critical graphs,
\emph{European Journal of Combinatorics} \textbf{106} (2022), 103576.

\bibitem{LRW}
X. Liu, S. Ren, and J. Wang,
Andr\'asfai--Erd\H{o}s--S\'os theorem under max-degree constraints,
arXiv:2512.10190 (2025).

\bibitem{ruzsa1976triple}
I.~Ruzsa and E.~Szemer\'edi,
Triple systems with no six points carrying three triangles,
\emph{Combinatorics (Proc. Fifth Hungarian Colloq., Keszthely, 1976)},
Colloq. Math. Soc. Janos Bolyai, vol.~18, North-Holland, Amsterdam, 1978, pp.~939--945.

\bibitem{Simonovits1968}
M.~Simonovits,
A method for solving extremal problems in graph theory, stability problems,
in \emph{Theory of Graphs (Proc. Colloq., Tihany, 1966)},
Academic Press, New York, 1968, pp.~279--319.

\bibitem{szemeredi1975}
E.~Szemer\'edi,
On sets of integers containing no $k$ elements in arithmetic progression,
\emph{Acta Arithmetica} \textbf{27} (1975), 199--245.

\end{thebibliography}
\end{document}